\definecolor{webgreen}{rgb}{0,.5,0}
\definecolor{webbrown}{rgb}{.6,0,0}
\tikzset{circle node/.style = {circle,inner sep=1pt,draw, fill=white},
        X node/.style = {fill=white, inner sep=1pt},
        dot node/.style = {circle, draw, inner sep=5pt}
        }
\newtheorem{theorem}{Theorem}
\newtheorem{lemma}[theorem]{Lemma}
\newtheorem{proposition}[theorem]{Proposition}
\newtheorem{corollary}[theorem]{Corollary}
\newtheorem{conjecture}[theorem]{Conjecture}
\theoremstyle{definition}
\newtheorem{example}[theorem]{Example}
\newcommand{\seqnum}[1]{\href{http://oeis.org/#1}{\underline{#1}}}
\begin{document}

\begin{center}
\vskip 1cm{\LARGE\bf Notes on the Hankel transform of linear combinations of consecutive pairs of Catalan numbers} \vskip 1cm \large
Paul Barry\\
School of Science\\
Waterford Institute of Technology\\
Ireland\\
\href{mailto:pbarry@wit.ie}{\tt pbarry@wit.ie}
\end{center}
\vskip .2 in

\begin{abstract} We provide a context around a conjectured closed form for the Hankel transform of linear combinations of consecutive pairs of Catalan numbers. This generalizes the formula for the Hankel transforms of the shifted Catalan numbers and the known results for linear combinations of pairs of Catalan numbers. Many interesting number triangles emerge, some of which we analyze using the language of Riordan arrays.
\end{abstract}

\section{Introduction}
The Catalan numbers $C_n=\frac{1}{n}\binom{n+1}{n}$, which begin 
 $$1,1,2,5,14,42,132,\ldots,$$ are among the most well known of integer sequences, as evidenced \cite{Stanley} by their many combinatorial applications. The sequence of the Catalan numbers is a moment sequence, as seen in the integral expression 
  $$C_n=\frac{1}{2\pi} \int_{0}^4 x^n \frac{\sqrt{x(4-x)}}{x}\,dx,$$ and thus the generating function $$c(x)=\frac{1-\sqrt{1-4x}}{2x}$$ of this sequence can be represented by a Jacobi type continued fraction. Specifically, this takes the form
$$c(x)=\cfrac{1}{1-x-\cfrac{x^2}{1-2x-\cfrac{x^2}{1-2x-\cfrac{x^2}{1-\cdots}}}}.$$
A consequence of this is that the Hankel transform $h_n$ of the Catalan numbers, defined here as $h_n=|C_{i,j}|_{0 \le i,j \le n}$,  is given by $h_n=1$ for all $n \in \mathbb{N}$. It happens that many Hankel transforms related to the Catalan numbers are tractable, being able to be analyzed and recognized in familiar forms, something which is emphatically not the case for most sequences using methods available at this stage.

For instance, the Hankel transform of the once-shifted sequence of Catalan numbers $C_{n+1}$ is once again given by $h_n=1$, and indeed this property characterizes the Catalan numbers. Note that $C_{n+1}$ is also a moment sequence, with
 $$C_{n+1}=\frac{1}{2\pi} \int_0^4 x^n \sqrt{x(4-x)}\,dx.$$ 
 The Hankel transform of the twice-shifted Catalan numbers $C_{n+2}$ is the sequence $2,3,4,5, \ldots$, and in general, it is known that the Hankel transform of $C_{n+k}$ is given by \cite{Desainte, Forrester, Kratt}
\begin{equation}\label{E1}h_n^{(k)} = |C_{i+j+k}|_{0 \le i,j \le n}=\prod_{i=1}^{k-1} \prod_{j=1}^{i} \frac{2n+2+i+j}{i+j}.\end{equation}
The following matrix  has as its rows the first $7$ such Hankel transforms.
$$\left(
\begin{array}{ccccccc}
 1 & 1 & 1 & 1 & 1 & 1 & \ldots \\
 1 & 1 & 1 & 1 & 1 & 1 & \ldots \\
 2 & 3 & 4 & 5 & 6 & 7 & \ldots \\
 5 & 14 & 30 & 55 & 91 & 140 & \ldots \\
 14 & 84 & 330 & 1001 & 2548 & 5712 & \ldots \\
 42 & 594 & 4719 & 26026 & 111384 & 395352 & \ldots \\
 132 & 4719 & 81796 & 884884 & 6852768 & 41314284 & \ldots \\
\end{array}
\right).$$
The generating functions $g^{(k)}(x)$ ($k=1, \ldots,7$) of these Hankel transform sequences can be deduced from the following list of generating functions, which record the generating functions $1+xg^{(k)}(x)$.
$$\frac{1}{1-x},$$
$$\frac{1}{1-x},$$
$$\frac{1}{(1-x)^2},$$
$$\frac{1+x}{(1-x)^4},$$
$$\frac{1+7x+7x^2+x^3}{(1-x)^7},$$
$$\frac{1+31 x+187 x^2+330 x^3+187 x^4+31 x^5+x^6}{(1-x)^{11}},$$
$$\frac{1+116 x+2727 x^2+21572 x^3+70328 x^4+103376 x^5+70328 x^6+21572 x^7+2727
   x^8+116 x^9+x^{10}}{(1-x)^{16}}.$$
$$ \cdots.$$
A first generalization of this is to look at the Hankel transforms of sums of consecutive pairs of Catalan numbers
$$h_n^{(k),p} = |C_{i+j+k}+C_{i+j+k+1}|_{0 \le i,j \le n}.$$
The following matrix has as its rows the first $7$ such Hankel transforms.
$$\left(
\begin{array}{ccccccc}
 2 & 5 & 13 & 34 & 89 & 233 & \ldots \\
 3 & 8 & 21 & 55 & 144 & 377 & \ldots \\
 7 & 31 & 115 & 390 & 1254 & 3893 & \ldots \\
 19 & 170 & 1075 & 5580 & 25529 & 107036 & \ldots \\
 56 & 1140 & 13915 & 124579 & 906472 & 5687928 & \ldots \\
 174 & 8745 & 225511 & 3813082 & 48173784 & 491753934 & \ldots \\
 561 & 73931 & 4316598 & 148118620 & 3489574428 & 62113595742 & \ldots \\
\end{array}.
\right).$$
The first row is given by the Fibonacci numbers $F_{2n+3}$ \cite{CRI}, while the second row contains the Fibonacci numbers $F_{2n+4}$.
The respective generating functions $g^{(k),p}$ of these sequences can be deduced from the following list of the generating functions  $1+g^{(k),p}$.
$$\frac{1-x}{1-3x+x^2},$$
$$\frac{1}{1-3x+x^2},$$
$$\frac{1+x}{(1-3x+x^2)^2},$$
$$\frac{1+7x-7x^3-x^4}{(1-3x+x^2)^4},$$
$$\frac{1+35x+160x^2-120x^3-371x^4+371x^5+120x^6-160x^7-35x^8-x^9}{(1-3x+x^2)^7},$$
$$\cdots.$$
For instance, the Hankel transform of $C_{n+2}+C_{n+3}$ has its generating function given by 
$$[x^{n+1}] \frac{1+x}{(1-3x+x^2)^2}.$$ 
A natural generalization is to consider the Hankel transforms of the linear combinations
$$ a C_{n+m} + b C_{n+m+1}.$$
Initial results are obtained in \cite{Dougherty, Raj}. For instance, the Hankel transform of $a C_n+b C_{n+1}$ is given by
\begin{equation}\label{D1}[x^{n+1}] \frac{1-bx}{1-(a+2b)x+b^2x^2}.\end{equation}
This is the case of $m=0$. For $m=1,2,3$ we find that the Hankel transforms are given by, respectively,
$$[x^{n+1}] \frac{1}{1-(a+2b)x+b^2x^2},$$
$$[x^{n+1}] \frac{1+bx}{(1-(a+2b)x+b^2x^2)^2},$$
and
$$[x^{n+1}] \frac{(1-b^2x^2)(1+(a+6b)x+b^2x^2)}{(1-(a+2b)x+b^2x^2)^4}.$$
The powers appearing in the denominators are conjectured to be the central polygonal numbers $\frac{m(m-1)}{2}+1$. As an example, the Hankel transform of $2 C_{n+5}+ 3 C_{n+6}$ is given by
$$[x^{n+1}] \frac{1+392 x +26818x^2+\cdots+1874923848x^{15}+43046721x^{16}}{(1-8x+9x^2)^{11}}.$$
\begin{example} When $a=b=1$, the Hankel transform $h_n$ of $a C_{n+1}+ bC_{n+2}$ is given by
$$[x^{n+1}] \frac{1}{1-3x+x^2},$$ while that of $a C_{n+2}+ b C_{n+3}$ is given by
$$[x^{n+1}] \frac{1+x}{(1-3x+x^2)^2}.$$
We now use the fact that the product of two generating functions expands to the convolution of the corresponding sequences to arrive at the following formula for $H_n$, the Hankel transform of $C_{n+2}+ C_{n+3}$.
$$H_n=\sum_{k=0}^{n+1} \binom{1}{k}\sum_{i=0}^{n-k+1} \tilde{h}_i \tilde{h}_{n-k-i+1},$$ where
$\tilde{h}_n$ is the sequence $1,h_0,h_1,h_2,\ldots$.

The general formula for arbitrary $a$ and $b$ is
$$H_n=\sum_{k=0}^{n+1}(b \binom{1}{k}-(b-1)\binom{0}{k})\sum_{i=0}^{n-k+1} \tilde{h}_i \tilde{h}_{n-k-i+1}.$$
\end{example}
The central point of interest of this note is the following conjecture, which generalizes Eq. (\ref{E1}).
\begin{conjecture} The Hankel transform of $a C_{n+m}+ b C_{n+m+1}$ is given by
$$\sum_{k=0}^{n+1} T_{n,k,m} b^{n-k+1} a^k,$$
where
$$T_{n,k,m}=\frac{C_m \binom{m+k-2}{m-2}\binom{n+k+2m-2}{2k+2m-3}\prod_{j=0}^{\lfloor \frac{m-1}{2} \rfloor-1}\binom{2n+2m-2j-1}{2m-4j-5}\prod_{j=0}^{m-3}(2m-j-2)}{\binom{2m-2}{2m-3}\prod_{j=0}^{\lfloor \frac{m-1}{2} \rfloor-1} \binom{2m-2j-1}{2m-4j-5} \prod_{j=0}^{m-3}(n+k+2m-j-2)}.$$
\end{conjecture}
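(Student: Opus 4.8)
The plan is to read off the coefficients $T_{n,k,m}$ directly as Hankel minors of the Catalan sequence. Write $h_n=\det(aC_{i+j+m}+bC_{i+j+m+1})_{0\le i,j\le n}$. Since each entry is linear in $a$ and $b$, the determinant is a homogeneous polynomial of degree $n+1$ in $(a,b)$, which already explains the shape $\sum_k T_{n,k,m}\,b^{n-k+1}a^k$ of the conjecture; the content is the evaluation of the coefficients. Expanding by multilinearity in the columns, each column $j$ contributes either the vector $(C_{i+j+m})_i$ (the ``$a$'' choice) or $(C_{i+j+m+1})_i$ (the ``$b$'' choice). Two columns coincide precisely when a column using its ``$b$'' choice sits immediately to the left of one using its ``$a$'' choice, so every term vanishes unless the set $S$ of ``$a$''-columns is a prefix $\{0,1,\dots,k-1\}$. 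Hence
$$T_{n,k,m}=\det(C_{i+q_j})_{0\le i,j\le n},\qquad \{q_0<\dots<q_n\}=\{m,m+1,\dots,m+n+1\}\setminus\{m+k\}.$$
Each coefficient is therefore a minor of the infinite Catalan Hankel matrix whose columns form an interval of length $n+2$ with a single hole at position $m+k$. I have checked this reduction against the $m=0$ row (\ref{D1}) and the explicit $m=1,2,3$ cases, up to a one-step shift between the running parameter and the actual Catalan shift that the final bookkeeping must fix.

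The next step is to evaluate this one-hole minor. I would pass to the moment functional $\mathcal{L}$ with $\mathcal{L}(x^t)=C_t$ and its monic orthogonal polynomials $p_i$; because the Hankel transform of the Catalan numbers is identically $1$, one has $\mathcal{L}(p_ip_j)=\delta_{ij}$. Replacing the monomial rows $x^i$ by the $p_i$ (a unipotent row operation that preserves the determinant) and using orthogonality gives
$$T_{n,k,m}=\det(\gamma_{q_j,i})_{0\le i,j\le n},$$
where $\gamma_{q,i}$ is the coefficient of $p_i$ in the expansion of $x^q$ in the orthogonal basis. For the Catalan weight these connection coefficients are the ballot / Catalan-triangle numbers and carry an explicit single-binomial closed form. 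This converts the problem into the evaluation of a determinant whose entries are a fixed binomial times linear factors, with the column index $q_j$ ranging over an interval minus one point.

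Finally I would evaluate $\det(\gamma_{q_j,i})$ by a standard determinant-calculus technique: either interpret the ballot numbers as counts of non-intersecting lattice paths and apply Lindstr\"om--Gessel--Viennot, or match the array to one of Krattenthaler's binomial-determinant lemmas. Either route produces a product formula, and the ``hole'' at $m+k$ splits the starting points into a regular block plus one displaced point, which is what should generate the two separate products $\prod_{j=0}^{m-3}$ and $\prod_{j=0}^{\lfloor (m-1)/2\rfloor-1}$ in the statement, the floor reflecting a parity in how the displaced path threads the others. The main obstacle is precisely this last reconciliation: turning the raw factorial product coming out of the evaluation into the exact packaging of the conjecture --- isolating the factor $C_m\binom{m+k-2}{m-2}\binom{n+k+2m-2}{2k+2m-3}$ and showing the remaining ratio of products collapses to the stated parity-indexed form --- together with fixing the one-step index shift noted above.

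A robust fallback, should the direct simplification prove stubborn, is to prove the formula by induction on the structure of the ballot matrix rather than by a single product evaluation. The maximal minors of the $(n+1)\times(n+2)$ matrix $(\gamma_{q,i})$ obtained by deleting each column in turn satisfy, for every row $i$, the single linear relation $\sum_{k}(-1)^k \gamma_{m+k,i}\,T_{n,k,m}=0$ coming from the vanishing of the augmented $(n+2)\times(n+2)$ determinant with a repeated row. This recurrence in $k$, anchored at the interval case by (\ref{E1}), should determine the coefficients uniquely, so it would suffice to verify that the conjectured closed form satisfies the same recurrence and the same initial value --- a check that reduces to contiguous-relation identities for the binomials and is, at worst, a finite hypergeometric computation.
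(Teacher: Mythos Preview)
The paper does not prove this statement: it is explicitly labelled a \emph{conjecture}, and the Conclusions section reiterates that the general closed form ``is still conjectural.'' There is therefore no proof in the paper to compare yours against; the paper only treats small $m$ case by case, reducing the Hankel matrices to $(2m{+}1)$-diagonal form via the fixed Riordan arrays $\mathcal{M}$ and $\tilde{\mathcal{M}}$ and then reading off generating functions.

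Your multilinearity-by-columns reduction is correct and is a genuine step beyond what the paper offers. The observation that the only surviving terms are those in which the ``$a$''-columns form a prefix, so that
\[
T_{n,k,m}\;=\;\det\bigl(C_{i+q_j}\bigr)_{0\le i,j\le n},\qquad \{q_0<\cdots<q_n\}=\{m,\dots,m+n+1\}\setminus\{m+k\},
\]
gives each coefficient a clean interpretation as a one-hole Catalan Hankel minor; this identity checks on the small cases and does not appear in the paper. The follow-up plan --- rewrite rows in the orthonormal Catalan basis to obtain $\det(\gamma_{q_j,i})$ with ballot numbers $\gamma_{q,i}$, then evaluate by Lindstr\"om--Gessel--Viennot or a Krattenthaler determinant lemma --- is the natural route and should produce \emph{some} product formula. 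Your aside about an index shift is also on target: the paper's displayed arrays $(T_{n,k,m})$ for $m=2,3,4,5$ encode the Hankel transforms of $aC_{n+m-1}+bC_{n+m}$, so the conjecture as literally stated is off by one in $m$ relative to those tables.

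That said, what you have is a strategy, not a proof. The entire content of the conjecture sits in the step you yourself flag as the ``main obstacle'': matching whatever product emerges from the minor evaluation to the specific packaging involving $C_m\binom{m+k-2}{m-2}\binom{n+k+2m-2}{2k+2m-3}$ and the two parity-indexed products. Neither the LGV evaluation nor the fallback recurrence is actually carried out, and the suggestion that the floor-indexed products reflect ``how the displaced path threads the others'' is purely heuristic. So your reduction is sound and sharper than anything in the paper, but the decisive computation remains open.
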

All the arrays $(T_{n,k,m})_{0\le n,k \le \infty}$ are Hessenberg in form. An interesting property of the array $(T_{n,k,m})_{0\le n,k \le \infty}$ is that its initial column is equal to the super-diagonal column of the array $(T_{n,k,m+1})_{0\le n,k \le \infty}$. Thus we have
$$T_{n,0,m}=T_{n,n+1,m+1}.$$
These are precisely the Hankel transforms of $C_{n+m}$.
For $m=2,3,4,5$, the arrays $(T_{n,k,m})_{0 \le n,k \le \infty}$ have the following $5 \times 5$ truncations.
$$\left(
\begin{array}{ccccc}
 2 & 1 & 0 & 0 & 0 \\
 3 & 4 & 1 & 0 & 0 \\
 4 & 10 & 6 & 1 & 0 \\
 5 & 20 & 21 & 8 & 1 \\
 6 & 35 & 56 & 36 & 10 \\
\end{array}
\right),\left(
\begin{array}{ccccc}
 5 & 2 & 0 & 0 & 0 \\
 14 & 14 & 3 & 0 & 0 \\
 30 & 54 & 27 & 4 & 0 \\
 55 & 154 & 132 & 44 & 5 \\
 91 & 364 & 468 & 260 & 65 \\
\end{array}
\right),$$
$$\left(
\begin{array}{ccccc}
 14 & 5 & 0 & 0 & 0 \\
 84 & 72 & 14 & 0 & 0 \\
 330 & 495 & 220 & 30 & 0 \\
 1001 & 2288 & 1716 & 520 & 55 \\
 2548 & 8190 & 9100 & 4550 & 1050 \\
\end{array}
\right),\left(
\begin{array}{ccccc}
 42 & 14 & 0 & 0 & 0 \\
 594 & 462 & 84 & 0 & 0 \\
 4719 & 6292 & 2574 & 330 & 0 \\
 26026 & 52052 & 35490 & 10010 & 1001 \\
 111384 & 309400 & 309400 & 142800 & 30940 \\
\end{array}
\right).$$

The first matrix in this list, 
$$\left(
\begin{array}{ccccc}
 2 & 1 & 0 & 0 & 0 \\
 3 & 4 & 1 & 0 & 0 \\
 4 & 10 & 6 & 1 & 0 \\
 5 & 20 & 21 & 8 & 1 \\
 6 & 35 & 56 & 36 & 10 \\
\end{array}
\right)$$ arises by removing the first row from the matrix $\left(\binom{n+k+1}{2k+1}\right)$, which begins 
$$\left(
\begin{array}{ccccccc}
 1 & 0 & 0 & 0 & 0 & 0 & 0 \\
 2 & 1 & 0 & 0 & 0 & 0 & 0 \\
 3 & 4 & 1 & 0 & 0 & 0 & 0 \\
 4 & 10 & 6 & 1 & 0 & 0 & 0 \\
 5 & 20 & 21 & 8 & 1 & 0 & 0 \\
 6 & 35 & 56 & 36 & 10 & 1 & 0 \\
 7 & 56 & 126 & 120 & 55 & 12 & 1 \\
\end{array}
\right).$$ 
This matrix represents the element $\left(\frac{1}{(1-x)^2}, \frac{x}{(1-x)^2}\right)$ of the Riordan group. 
The second triangle,
$$\left(
\begin{array}{ccccc}
 5 & 2 & 0 & 0 & 0 \\
 14 & 14 & 3 & 0 & 0 \\
 30 & 54 & 27 & 4 & 0 \\
 55 & 154 & 132 & 44 & 5 \\
 91 & 364 & 468 & 260 & 65 \\
\end{array}
\right)$$ arises in the same way from the matrix $\left(\frac{(2n+3)(k+1)}{2k+3}\binom{n+k+2}{2k+2}\right)$. Dividing the columns of this matrix by $k+1$ gives us the matrix $\left(\frac{2n+3}{2k+3}\binom{n+k+2}{2k+2}\right)$, which begins
$$\left(
\begin{array}{ccccccc}
 1 & 0 & 0 & 0 & 0 & 0 & 0 \\
 5 & 1 & 0 & 0 & 0 & 0 & 0 \\
 14 & 7 & 1 & 0 & 0 & 0 & 0 \\
 30 & 27 & 9 & 1 & 0 & 0 & 0 \\
 55 & 77 & 44 & 11 & 1 & 0 & 0 \\
 91 & 182 & 156 & 65 & 13 & 1 & 0 \\
 140 & 378 & 450 & 275 & 90 & 15 & 1 \\
\end{array}
\right).$$ 
This matrix represents the element $\left(\frac{1+x}{(1-x)^4}, \frac{x}{(1-x)^2}\right)$ of the Riordan group.
Returning to the matrix $\left(T_{n,k,4}\right)$ that begins
$$\left(
\begin{array}{ccccc}
 14 & 5 & 0 & 0 & 0 \\
 84 & 72 & 14 & 0 & 0 \\
 330 & 495 & 220 & 30 & 0 \\
 1001 & 2288 & 1716 & 520 & 55 \\
 2548 & 8190 & 9100 & 4550 & 1050 \\
\end{array}
\right),$$ 
we can describe it in terms of scaled versions of the two Riordan arrays $\left(\frac{1+7x+7x^2+x^2}{(1-x)^7}, \frac{x}{(1-x)^2}\right)$ and $\left(\frac{1+x}{(1-x)^7},\frac{x}{(1-x)^2}\right)$ as follows. We have
\begin{scriptsize} 
$$T_{n,k,4}=\left(\binom{k+3}{3}+\binom{k+2}{3}\right)[x^{n+1}] \frac{1+7x+7x^2+x^2}{(1-x)^7}\left(\frac{x}{(1-x)^7}\right)^k-8 \binom{k+2}{3}[x^n]\frac{1+x}{(1-x)^7}\left(\frac{x}{(1-x)^2}\right)^k.$$
\end{scriptsize}
We close this section by noting that the matrix which begins 
$$\left(
\begin{array}{ccccccc}
 1 & 0 & 0 & 0 & 0 & 0 & 0 \\
 14 & 5 & 0 & 0 & 0 & 0 & 0 \\
 84 & 72 & 14 & 0 & 0 & 0 & 0 \\
 330 & 495 & 220 & 30 & 0 & 0 & 0 \\
 1001 & 2288 & 1716 & 520 & 55 & 0 & 0 \\
 2548 & 8190 & 9100 & 4550 & 1050 & 91 & 0 \\
 5712 & 24480 & 37400 & 27200 & 10200 & 1904 & 140 \\
\end{array}
\right)$$ 
has its $(n,k)$-th element given by 
$$[x^{n-k}] \frac{\binom{k+3}{3}+\binom{k+2}{3}+(7\binom{k+3}{3}-\binom{k+2}{3})x+(7\binom{k+3}{3}-\binom{k+2}{3})x^2+(\binom{k+3}{3}+\binom{k+2}{3})x^3}{(1-x)^{2k+7}}.$$
As we have now met several Riordan arrays, we devote the next section to an introduction of the Riordan group for those who may not be familiar with the concept. 
\section{A brief review of Riordan arrays}
We shall use Riordan arrays \cite{book, SGWW} in the sequel as they are well adapted to dealing with the generating functions that are of interest to us. We therefore give a very quick introduction to these special matrices. Many examples of Riordan arrays are to be found in the On-Line Encyclopedia of Integer Sequences (OEIS) \cite{SL1, SL2}. For our purposes, it is sufficient to work with $\mathbb{Q}[[x]]$, the ring of formal power series with rational coefficients. A Riordan array will be the matrix representation of an element $(g, f)$ of $\mathbb{Q}[[x]] \times \mathbb{Q}[[x]]$, where
$$g(x)=g_0+g_1x+g_2 x^2+\cdots,\quad g_0 \ne 0,$$
$$f(x)=f_1 x+ f_2x^2+f_3x^3+\cdots, \quad f_0=0, f_1 \ne 0,$$ where $g_i, f_i \in \mathbb{Q}$. The condition on $g(x)$ ensures that it has a multiplicative inverse, while the condition on $f(x)$ ensures that it is composable with a compositional inverse. The matrix representation of $(g,f)$ is then the matrix $(t_{n,k})_{0 \le i,j \le \infty}$ where
$$t_{n,k}=[x^n] g(x)f(x)^k.$$
Here, $[x^n]$ is the functional that extracts the coefficient of $x^n$ in a formal power series. Note that $x$ plays the role of a ``dummy variable'' in the sense that
$$t_{n,k}=[x^n] g(x)f(x)^k = [t^n] g(t) f(t)^k.$$
There is a group structure on the pairs $(g, f)$ where multiplication is given by
$$(g, f)\cdot (u, v)= (g.u(f), v(f))$$ and inverses are given by
$$(g, f)^{-1}= \left(\frac{1}{g(\bar{f})}, \bar{f}\right)$$ where $\bar{f}$ is the reversion (or compositional inverse) of $f$, that is, where have $f(\bar{f})(x)=x$ and $\bar{f}(f)(x)=x$. In other words, $\bar{f}$ is the solution $u(x)$ of the equation $f(u)=x$ which satisfies $u(0)=0$. The product of pairs $(g, f)$ translates in the matrix representation into ordinary matrix multiplication, and the inverse of a pair $(g, f)$ corresponds to the normal matrix inverse. The identity element is $(1, x)$ whose representation is the identity matrix. This allows us to interpret algebraic operations on power series as linear algebra operations.
The \emph{fundamental theorem of Riordan arrays (FTRA)} is the statement that there is an operation of this group on the ring of power series of the form
$$(g, f)h=g.h(f).$$ Using dummy variables, this appears as $(g(x), f(x))h(x)=g(x)h(f(x))$. In the matrix representation, this is mediated by multiplying the vector whose elements are the coefficients of $h$ by the matrix representing the pair $(g, f)$. Then the elements of the resulting vector are the coefficients of the power series $g.h(f)$.
Each Riordan array is of infinite extent. When we display examples, we shall use a suitable truncation.
\begin{example}
The most famous example of a non-trivial Riordan array is Pascal's triangle $(\binom{n}{k})$. This is the matrix representation of the pair $\left(\frac{1}{1-x}, \frac{x}{1-x}\right)$. Solving
$$\frac{u}{1-u}=x$$ gives $u(x)=\frac{x}{1+x}$ from which we deduce that
$$\left(\frac{1}{1-x}, \frac{x}{1-x}\right)^{-1}=\left(\frac{1}{1+x}, \frac{x}{1+x}\right).$$
This corresponds to the fact that the inverse of $(\binom{n}{k})$ is $((-1)^{n-k}\binom{n}{k})$.
\end{example}
\begin{example} For this paper, the following two Riordan arrays will play an important role. The first is
$$\left(\frac{1}{1+x}, \frac{x}{(1+x)^2}\right).$$ Its matrix representation is given by
$$t_{n,k}=(-1)^{n-k} \binom{n+k}{2k}.$$
The equation
$$\frac{u}{(1+u)^2}=x$$ has for its solution with $u(0)=0$ the following expression.
$$u(x)=\frac{1-2x-\sqrt{1-4x}}{2x}.$$
Then we have $u(x)=\overline{\frac{x}{(1+x)^2}}=c(x)-1$, where $c(x)$ is the generating function of the Catalan numbers.
We find that
$$\left(\frac{1}{1+x}, \frac{x}{(1+x)^2}\right)^{-1}=(c(x), c(x)-1)=(c(x), x c(x)^2).$$
In matrix terms, we have
$$\left(
\begin{array}{ccccccc}
 1 & 0 & 0 & 0 & 0 & 0 & 0 \\
 -1 & 1 & 0 & 0 & 0 & 0 & 0 \\
 1 & -3 & 1 & 0 & 0 & 0 & 0 \\
 -1 & 6 & -5 & 1 & 0 & 0 & 0 \\
 1 & -10 & 15 & -7 & 1 & 0 & 0 \\
 -1 & 15 & -35 & 28 & -9 & 1 & 0 \\
 1 & -21 & 70 & -84 & 45 & -11 & 1 \\
\end{array}
\right)^{-1}=\left(
\begin{array}{ccccccc}
 1 & 0 & 0 & 0 & 0 & 0 & 0 \\
 1 & 1 & 0 & 0 & 0 & 0 & 0 \\
 2 & 3 & 1 & 0 & 0 & 0 & 0 \\
 5 & 9 & 5 & 1 & 0 & 0 & 0 \\
 14 & 28 & 20 & 7 & 1 & 0 & 0 \\
 42 & 90 & 75 & 35 & 9 & 1 & 0 \\
 132 & 297 & 275 & 154 & 54 & 11 & 1 \\
\end{array}
\right).$$

The first column of the inverse is seen to be $C_n$. 
Thus the matrix $\left((-1)^{n-k} \binom{n+k}{2k}\right)$ is directly related to the Catalan numbers. We note in particular that
$$c\left(\frac{x}{(1+x)^2}\right)=1+x,$$ and so we have
$$\left(\frac{1}{1+x}, \frac{x}{(1+x)^2}\right)\cdot c(x)=\frac{1}{1+x} c\left(\frac{x}{(1+x)^2}\right)=\frac{1}{1+x}.(1+x)=1.$$
Note that $1$ as a formal power series is the generating function of the sequence $(1,0,0,0,\ldots)$; this sequence is often denoted by $0^n$. In matrix terms, we have
$$\left(
\begin{array}{ccccccc}
 1 & 0 & 0 & 0 & 0 & 0 & 0 \\
 -1 & 1 & 0 & 0 & 0 & 0 & 0 \\
 1 & -3 & 1 & 0 & 0 & 0 & 0 \\
 -1 & 6 & -5 & 1 & 0 & 0 & 0 \\
 1 & -10 & 15 & -7 & 1 & 0 & 0 \\
 -1 & 15 & -35 & 28 & -9 & 1 & 0 \\
 1 & -21 & 70 & -84 & 45 & -11 & 1 \\
\end{array}
\right)\left(\begin{array}{c}
1\\
1\\
2\\
5\\
14\\
42\\
132\\ \end{array}\right)=
\left(\begin{array}{c}
1\\
0\\
0\\
0\\
0\\
0\\
0\\ \end{array}\right).$$

The second Riordan array of importance is related to the first. It is 
$$\left(\frac{1}{(1+x)^2}, \frac{x}{(1+x)^2}\right),$$ which has the matrix representation 
$$t_{n,k}=(-1)^{n-k} \binom{n+k+1}{2k+1}.$$ This matrix begins 
$$\left(
\begin{array}{ccccccc}
 1 & 0 & 0 & 0 & 0 & 0 & 0 \\
 -2 & 1 & 0 & 0 & 0 & 0 & 0 \\
 3 & -4 & 1 & 0 & 0 & 0 & 0 \\
 -4 & 10 & -6 & 1 & 0 & 0 & 0 \\
 5 & -20 & 21 & -8 & 1 & 0 & 0 \\
 -6 & 35 & -56 & 36 & -10 & 1 & 0 \\
 7 & -56 & 126 & -120 & 55 & -12 & 1 \\
\end{array}
\right),$$ with inverse that begins
$$\left(
\begin{array}{ccccccc}
 1 & 0 & 0 & 0 & 0 & 0 & 0 \\
 2 & 1 & 0 & 0 & 0 & 0 & 0 \\
 5 & 4 & 1 & 0 & 0 & 0 & 0 \\
 14 & 14 & 6 & 1 & 0 & 0 & 0 \\
 42 & 48 & 27 & 8 & 1 & 0 & 0 \\
 132 & 165 & 110 & 44 & 10 & 1 & 0 \\
 429 & 572 & 429 & 208 & 65 & 12 & 1 \\
\end{array}
\right).$$ We see that the first column of the inverse is given by $C_{n+1}$. 
\end{example}

We shall write $M=\left((-1)^{n-k} \binom{n+k}{2k}\right)$ and $\tilde{M}=\left((-1)^{n-k} \binom{n+k+1}{2k+1}\right)$. We shall also use the notation 
$$\mathcal{M}=\left(\frac{1}{1+x}, \frac{x}{(1+x)^2}\right)$$ 
and 
$$\tilde{\mathcal{M}}=\left(\frac{1}{(1+x)^2}, \frac{x}{(1+x)^2}\right)$$ as elements of the Riordan group.

\section{The case of $a C_n+ b C_{n+1}$}
In order to find a closed form for the Hankel transform of $a C_n+ b C_{n+1}$, we proceed as follows. 
\begin{lemma} We have
$$\left(\frac{1}{1-bx}, \frac{x}{(1-bx)^2}\right)\cdot \frac{1}{1-ax}=\frac{1-bx}{1-(a+2b)x+b^2x^2}.$$
\end{lemma}
\begin{proof} By the fundamental theorem of Riordan arrays, we have
\begin{align*}\left(\frac{1}{1-bx}, \frac{x}{(1-bx)^2}\right)\cdot \frac{1}{1-ax}&=
\frac{1}{1-bx} \frac{1}{1-\frac{ax}{(1-bx)^2}}\\
&=\frac{1}{1-bx} \frac{(1-bx)^2}{(1-bx)^2-ax}\\
&=\frac{1-bx}{1-(a+2b)x+b^2x^2}.\end{align*}
\end{proof}
The Riordan array $\left(\frac{1}{1-bx}, \frac{x}{(1-bx)^2}\right)$ has general term $\binom{n+k}{2k}b^{n-k}$.
\begin{proposition} The Hankel transform of $aC_n+bC_{n+1}$ is given by
$$h_n=\sum_{k=0}^{n+1} \binom{n+k+1}{2k}b^{n-k+1}a^k.$$
\end{proposition}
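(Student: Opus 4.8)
The plan is to start from the generating-function expression (D1) for the Hankel transform, namely $h_n = [x^{n+1}]\frac{1-bx}{1-(a+2b)x+b^2x^2}$, and to convert the coefficient extraction into a matrix--vector product by combining the Lemma just proved with the fundamental theorem of Riordan arrays. The Lemma identifies the rational function $\frac{1-bx}{1-(a+2b)x+b^2x^2}$ as the image of $\frac{1}{1-ax}$ under the Riordan array $\mathcal{R}:=\left(\frac{1}{1-bx}, \frac{x}{(1-bx)^2}\right)$, whose general term has already been recorded as $\binom{n+k}{2k}b^{n-k}$. Reading off the coefficient of $x^{n+1}$ then becomes the same task as reading off the $(n+1)$-st entry of the vector obtained by applying the matrix $\mathcal{R}$ to the coefficient vector of $\frac{1}{1-ax}$.

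First I would observe that $\frac{1}{1-ax}=\sum_{k\ge 0}a^k x^k$, so its coefficient vector is $(a^k)_{k\ge 0}$. By the FTRA, applying $\mathcal{R}$ to this series produces a power series whose coefficient of $x^N$ is the $N$-th entry of the matrix product, that is,
$$[x^N]\left(\mathcal{R}\cdot\frac{1}{1-ax}\right) = \sum_{k=0}^{N}\binom{N+k}{2k}b^{N-k}a^k.$$
Combining this with the Lemma gives $[x^N]\frac{1-bx}{1-(a+2b)x+b^2x^2}=\sum_{k=0}^N\binom{N+k}{2k}b^{N-k}a^k$. Finally I would specialise $N=n+1$, as dictated by (D1), to obtain
$$h_n=\sum_{k=0}^{n+1}\binom{n+k+1}{2k}b^{n-k+1}a^k,$$
which is exactly the claimed closed form.

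The computation is essentially bookkeeping, so the only points that require care are the index management rather than any deep obstacle. I would check that the upper summation limit is correct: since $\binom{N+k}{2k}=0$ whenever $2k>N+k$, i.e.\ $k>N$, the sum truncates naturally at $k=N$, and the shift $N=n+1$ then yields precisely the stated range $0\le k\le n+1$. I would also make explicit the (standard) identification of ``applying a Riordan array to a power series'' with ordinary matrix--vector multiplication, so that the coefficient of $x^N$ in the output equals $\sum_k t_{N,k}\,[x^k]\frac{1}{1-ax}$ with $t_{N,k}=\binom{N+k}{2k}b^{N-k}$ and $[x^k]\frac{1}{1-ax}=a^k$. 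No genuine difficulty arises beyond verifying these indexing details.
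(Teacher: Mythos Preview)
Your proof is correct and follows exactly the same approach as the paper: combine Eq.~(\ref{D1}) with the Lemma to write $h_n=[x^{n+1}]\big(\mathcal{R}\cdot\frac{1}{1-ax}\big)$ for the Riordan array $\mathcal{R}=\left(\frac{1}{1-bx},\frac{x}{(1-bx)^2}\right)$, and then read off the coefficient using the general term $\binom{n+k}{2k}b^{n-k}$ of $\mathcal{R}$. The paper compresses this into a single sentence, while you have spelled out the coefficient extraction and index shift $N=n+1$ explicitly.
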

\begin{proof}
By the lemma, this follows from Eq (\ref{D1}).
\end{proof}
The generating function $A(x)$ of $aC_n+bC_{n+1}$ is given by $$A(x)=\frac{1}{x}(c(x)(ax+b)-b).$$
Then $$ \frac{xA(x)-yA(y)}{x-y}$$ is the bivariate generating function of the Hankel matrix $\mathcal{H}$ of  $aC_n+bC_{n+1}$. Now the matrix $((-1)^{n-k} \binom{n+k}{2k})$ is lower triangular with $1$'s on the diagonal, so that the sequence of principal minors of the matrix whose bivariate generating function is given by 
$$\left(\frac{1}{1+x},\frac{x}{(1+x)^2}\right)\cdot \mathcal{H}\cdot \left(\frac{1}{1+y},\frac{y}{(1+y)^2}\right)^T$$ will be equal to the Hankel transform of $aC_n+bC_{n+1}$.
Thus we calculate 
$$\frac{1}{(1+x)(1+y)}\frac{\frac{x}{(1+x)^2}A\left(\frac{x}{(1+x)^2}\right)-\frac{y}{(1+y)^2}A\left(\frac{y}{(1+y)^2}\right)}{\frac{x}{(1+x)^2}-\frac{y}{(1+y)^2}}$$ to obtain the generating function 
$$\frac{a+b+bx(1+y)+by}{1-xy}.$$ 
This expands to give the tri-diagonal matrix which begins 
$$\left(
\begin{array}{ccccccc}
 a+b & b & 0 & 0 & 0 & 0 & 0 \\
 b & a+2 b & b & 0 & 0 & 0 & 0 \\
 0 & b & a+2 b & b & 0 & 0 & 0 \\
 0 & 0 & b & a+2 b & b & 0 & 0 \\
 0 & 0 & 0 & b & a+2 b & b & 0 \\
 0 & 0 & 0 & 0 & b & a+2 b & b \\
 0 & 0 & 0 & 0 & 0 & b & a+2 b \\
\end{array}
\right).$$ 
The principal minor sequence of this matrix is then given by 
$$h_n=[x^{n+1}]\frac{1-bx}{1-(a+2b)x+b^2 x^2}.$$

\section{The case of $a C_{n+1}+ b C_{n+2}$}
\begin{lemma} We have
$$\left(\frac{1}{(1-bx)^2}, \frac{x}{(1-bx)^2}\right)\cdot \frac{1}{1-ax}=\frac{1}{1-(a+2b)x+b^2x^2}.$$
\end{lemma}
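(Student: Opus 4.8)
The final statement is a direct analogue of the first lemma in the paper, with $\frac{1}{1-bx}$ replaced by $\frac{1}{(1-bx)^2}$ in the first component of the Riordan array, and the right-hand side correspondingly losing its numerator. The plan is therefore to mimic the proof of the earlier lemma exactly, applying the fundamental theorem of Riordan arrays (FTRA).

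First I would invoke the FTRA in the form $(g,f)\cdot h = g \cdot h(f)$, with $g(x)=\frac{1}{(1-bx)^2}$, $f(x)=\frac{x}{(1-bx)^2}$, and $h(x)=\frac{1}{1-ax}$. This immediately produces
$$\left(\frac{1}{(1-bx)^2}, \frac{x}{(1-bx)^2}\right)\cdot \frac{1}{1-ax}=\frac{1}{(1-bx)^2}\cdot\frac{1}{1-\frac{ax}{(1-bx)^2}}.$$
The second step is the algebraic simplification: clearing the compound fraction by multiplying numerator and denominator of the second factor by $(1-bx)^2$ gives $\frac{(1-bx)^2}{(1-bx)^2-ax}$, and the two factors of $\frac{1}{(1-bx)^2}$ and $(1-bx)^2$ cancel, leaving $\frac{1}{(1-bx)^2-ax}$. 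Finally I would expand $(1-bx)^2-ax = 1-2bx+b^2x^2-ax = 1-(a+2b)x+b^2x^2$, which is precisely the claimed denominator, completing the identification with $\frac{1}{1-(a+2b)x+b^2x^2}$.

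There is essentially no hard part here: the computation is a one-line application of the FTRA followed by routine fraction arithmetic. The only point requiring a moment's care is the cancellation, where the squared factor $\frac{1}{(1-bx)^2}$ from $g$ exactly annihilates the $(1-bx)^2$ introduced when clearing the inner fraction; this is what makes the numerator collapse to $1$, in contrast to the earlier lemma where the single factor $\frac{1}{1-bx}$ left a surviving $(1-bx)$ in the numerator. I would also briefly remark that the hypotheses of the FTRA are met, since $g(0)=1\neq 0$ and $f(x)=\frac{x}{(1-bx)^2}$ has $f_0=0$ with $f_1=1\neq 0$, so $(g,f)$ is a bona fide element of the Riordan group and the composition $h(f)$ is well defined as a formal power series.
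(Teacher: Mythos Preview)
Your proof is correct and follows exactly the approach the paper intends: the paper's own proof simply says ``As in the previous lemma, this follows from the FTRA,'' and you have spelled out that FTRA computation and the ensuing cancellation in full detail.
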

\begin{proof} As in the previous lemma, this follows from the FTRA.
\end{proof}
The Riordan array $\left(\frac{1}{(1-bx)^2}, \frac{x}{(1-bx)^2}\right)$ has general term $(\binom{n+k+1}{2k})b^{n-k}$. We thus obtain the following result.
\begin{proposition}
The Hankel transform of $a C_{n+1}+ bC_{n+2}$ is given by
$$h_n=\sum_{k=0}^{n+1} \binom{n+k+2}{2k+1}b^{n-k+1}a^k.$$
\end{proposition}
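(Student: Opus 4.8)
The plan is to mirror the argument of the previous section essentially verbatim, with the Riordan array $\left(\frac{1}{1-bx}, \frac{x}{(1-bx)^2}\right)$ replaced by the array $\left(\frac{1}{(1-bx)^2}, \frac{x}{(1-bx)^2}\right)$ of the lemma just proved. The starting point is the fact, recorded in the introduction for the case $m=1$, that the Hankel transform of $aC_{n+1}+bC_{n+2}$ has generating function $[x^{n+1}]\frac{1}{1-(a+2b)x+b^2x^2}$. It therefore suffices to read off the coefficients of $\frac{1}{1-(a+2b)x+b^2x^2}$ as a closed sum and then shift the index by one.

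First I would use the lemma to write $\frac{1}{1-(a+2b)x+b^2x^2}=\left(\frac{1}{(1-bx)^2}, \frac{x}{(1-bx)^2}\right)\cdot\frac{1}{1-ax}$, turning coefficient extraction into an application of the fundamental theorem of Riordan arrays. Next I would compute the general term of the array directly,
$$t_{n,k}=[x^n]\frac{1}{(1-bx)^2}\left(\frac{x}{(1-bx)^2}\right)^k=[x^{n-k}]\frac{1}{(1-bx)^{2k+2}}=\binom{n+k+1}{n-k}b^{n-k}=\binom{n+k+1}{2k+1}b^{n-k},$$
the last step being the complementary-index identity $\binom{n+k+1}{n-k}=\binom{n+k+1}{2k+1}$. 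Since $\frac{1}{1-ax}=\sum_{k\ge 0}a^k x^k$, the FTRA then gives $[x^n]\frac{1}{1-(a+2b)x+b^2x^2}=\sum_k t_{n,k}a^k=\sum_k\binom{n+k+1}{2k+1}b^{n-k}a^k$.

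Finally I would carry out the index shift: applying $[x^{n+1}]$ in place of $[x^n]$ amounts to substituting $n\mapsto n+1$ in the coefficient formula, which produces
$$h_n=\sum_{k=0}^{n+1}\binom{n+k+2}{2k+1}b^{n-k+1}a^k,$$
the claimed expression. The argument is essentially mechanical, and the only points needing attention are that the lower index of the binomial coefficient is $2k+1$ (arising from the exponent $2k+2$ in $(1-bx)^{-(2k+2)}$) and that the $[x^n]\to[x^{n+1}]$ shift is performed correctly. As a sanity check, setting $a=b=1$ collapses the sum to $\sum_{k=0}^{n+1}\binom{n+k+2}{2k+1}$, which returns the values $3,8,21,55,\dots$ of the Hankel transform of $C_{n+1}+C_{n+2}$ listed in the introduction.
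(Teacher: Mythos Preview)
Your proof is correct and follows essentially the same route as the paper: use the lemma to identify $\frac{1}{1-(a+2b)x+b^2x^2}$ as the image of $\frac{1}{1-ax}$ under the Riordan array $\left(\frac{1}{(1-bx)^2},\frac{x}{(1-bx)^2}\right)$, read off the general term of that array, and shift the index. Your explicit derivation of $t_{n,k}=\binom{n+k+1}{2k+1}b^{n-k}$ (the paper records this as $\binom{n+k+1}{2k}b^{n-k}$, evidently a slip, since the stated $h_n$ requires $2k+1$) and the sanity check are both sound.
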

We now note that we have the factorization of Riordan arrays
$$\left(\frac{1}{(1-bx)^2}, \frac{x}{(1-bx)^2}\right)=\left(\frac{1}{1-bx}, x\right)\cdot \left(\frac{1}{1-bx}, \frac{x}{(1-bx)^2}\right).$$
We deduce the following.
\begin{proposition}
Let $h_n$ be the Hankel transform of $a C_n+ b C_{n+1}$. Let $H_n$ be the Hankel transform of $a C_{n+1}+ b C_{n+2}$. Let $\tilde{h}_n$ be the sequence $1, h_0, h_1,\ldots$ and let $\tilde{H}_n$ be the sequence $1, H_0, H_1,\ldots$. Then we have
$$\tilde{H}_n=\sum_{k=0}^n b^{n-k}\tilde{H}_k$$ and hence
$$H_n = \sum_{k=0}^{n+1} b^{n-k+1}\tilde{H}_k.$$
\end{proposition}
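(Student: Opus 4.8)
The plan is to reduce everything to a single identity between the ordinary generating functions of $\tilde h$ and $\tilde H$, the extra factor being exactly the one supplied by the Riordan factorization displayed above. First I would pin down these two generating functions. By the lemma of the previous section together with Eq.~(\ref{D1}), the Hankel transform of $aC_n+bC_{n+1}$ is $h_n=[x^{n+1}]g(x)$, where $g(x)=\frac{1-bx}{1-(a+2b)x+b^2x^2}$; likewise, by the lemma of the present section together with the $m=1$ generating function recorded in the introduction, the Hankel transform of $aC_{n+1}+bC_{n+2}$ is $H_n=[x^{n+1}]G(x)$, where $G(x)=\frac{1}{1-(a+2b)x+b^2x^2}$. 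Since $g(0)=G(0)=1$, prepending a $1$ to each sequence absorbs the shift cleanly: one checks that $\sum_{n\ge 0}\tilde h_n x^n=g(x)$ and $\sum_{n\ge 0}\tilde H_n x^n=G(x)$. This identification of the $\tilde{\phantom{h}}$-sequences with the unshifted rational generating functions is the step that calls for care, since it is where the index bookkeeping lives; everything after it is formal.

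Second, I would record the relation $G(x)=\frac{1}{1-bx}\,g(x)$, which is immediate from the explicit forms above because the numerator $1-bx$ of $g$ cancels against the denominator factor $1-bx$. This is precisely the content of the factorization $\left(\frac{1}{(1-bx)^2},\frac{x}{(1-bx)^2}\right)=\left(\frac{1}{1-bx},x\right)\cdot\left(\frac{1}{1-bx},\frac{x}{(1-bx)^2}\right)$: applying both sides to $\frac{1}{1-ax}$ and invoking the FTRA, the inner factor on the right produces $g$, while the outer factor $\left(\frac{1}{1-bx},x\right)$ merely multiplies by $\frac{1}{1-bx}$, its second component being $x$ so that the composition is trivial.

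Finally, I would extract coefficients. Writing $\frac{1}{1-bx}=\sum_{j\ge 0}b^j x^j$ and taking the Cauchy product with $g(x)=\sum_k \tilde h_k x^k$ gives $\tilde H_n=[x^n]\frac{g(x)}{1-bx}=\sum_{k=0}^n b^{n-k}\tilde h_k$, which is the first asserted formula. The second then follows by one index shift: since $\tilde H$ is the sequence $1,H_0,H_1,\ldots$, we have $H_n=\tilde H_{n+1}$, so evaluating the first formula at $n+1$ yields $H_n=\sum_{k=0}^{n+1}b^{n-k+1}\tilde h_k$. The only genuine obstacle is the generating-function identification of the first step; the remainder is a one-line cancellation followed by a routine convolution.
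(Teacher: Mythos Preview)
Your proof is correct and follows essentially the same approach as the paper. The paper's proof is a single line---``the $(n,k)$-th element of the Riordan array $\left(\frac{1}{1-bx},x\right)$ is $b^{n-k}$''---which is just a compressed version of what you do: the Riordan factorization, applied to $\frac{1}{1-ax}$, yields exactly your relation $G(x)=\frac{1}{1-bx}\,g(x)$, and then reading off the $(n,k)$-entry $b^{n-k}$ of $\left(\frac{1}{1-bx},x\right)$ is the same as taking the Cauchy product you write out. Your additional care in verifying that the generating functions of $\tilde h$ and $\tilde H$ really are $g$ and $G$ (via $g(0)=G(0)=1$) is a useful clarification the paper leaves implicit, and you also silently correct the evident typo in the statement (the right-hand sides should involve $\tilde h_k$, not $\tilde H_k$).
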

\begin{proof}
In effect, the $(n,k)$-th element of the Riordan array $\left(\frac{1}{1-bx},x\right)$ is $b^{n-k}$.
\end{proof}
The generating function $A(x)$ of $aC_{n+1}+bC_{n+2}$ is given by $$A(x)=\frac{1}{x^2}\left(ct(x)(ax+b)-(ax+bx+b)\right).$$
Then $$ \frac{xA(x)-yA(y)}{x-y}$$ is the bivariate generating function of the Hankel matrix $\mathcal{H}$ of  $aC_n+bC_{n+1}$. Now the matrix $\tilde{M}=((-1)^{n-k} \binom{n+k+1}{2k+1})$ is lower triangular with $1$'s on the diagonal, so that the sequence of principal minors of the matrix whose bivariate generating function is given by
$$\left(\frac{1}{(1+x)^2},\frac{x}{(1+x)^2}\right)\cdot \mathcal{H}\cdot \left(\frac{1}{(1+y)^2},\frac{y}{(1+y)^2}\right)^T$$ will be equal to the Hankel transform of $aC_{n+1}+bC_{n+2}$.
Thus we calculate
$$\frac{1}{(1+x)^2(1+y)^2}\frac{\frac{x}{(1+x)^2}A\left(\frac{x}{(1+x)^2}\right)-\frac{y}{(1+y)^2}A\left(\frac{y}{(1+y)^2}\right)}{\frac{x}{(1+x)^2}-\frac{y}{(1+y)^2}}$$ to obtain the generating function
$$\frac{a+2b+bx+by}{1-xy}.$$
This expands to give the tri-diagonal matrix which begins
$$\left(
\begin{array}{ccccccc}
 a+2b & b & 0 & 0 & 0 & 0 & 0 \\
 b & a+2 b & b & 0 & 0 & 0 & 0 \\
 0 & b & a+2 b & b & 0 & 0 & 0 \\
 0 & 0 & b & a+2 b & b & 0 & 0 \\
 0 & 0 & 0 & b & a+2 b & b & 0 \\
 0 & 0 & 0 & 0 & b & a+2 b & b \\
 0 & 0 & 0 & 0 & 0 & b & a+2 b \\
\end{array}
\right).$$
The principal minor sequence of this matrix is then given by
$$h_n=[x^{n+1}]\frac{1}{1-(a+2b)x+b^2 x^2}.$$ 

We now make the observation, using the notation $H(n+r, a,b)$ for the Hankel matrix of $aC_{n+r}+bC_{n+r+1}$, that the matrix
$$ M\cdot H(n,a,b)\cdot M^T-\tilde{M}\cdot H(n+1,a,b)\cdot \tilde{M}^T$$ begins
$$\left(
\begin{array}{ccccccc}
 b & 0 & 0 & 0 & 0 & 0 & 0 \\
 0 &  0 & 0 & 0 & 0 & 0 & 0 \\
 0 & 0 & 0 & 0 & 0 & 0 & 0 \\
 0 & 0 & 0 & 0 & 0 & 0 & 0 \\
 0 & 0 & 0 & 0 &  0 & 0 & 0 \\
 0 & 0 & 0 & 0 & 0 & 0 & 0 \\
 0 & 0 & 0 & 0 & 0 & 0 & 0 \\
\end{array}
\right).$$ 
This is the Hankel matrix of the sequence $b,0,0,0,\ldots$. 

\section{The case of $a C_{n+2}+b C_{n+3}$}
By another use of the FTRA, we obtain the following result.
\begin{lemma} We have

$$\left(\frac{1+bx}{(1-bx)^4}, \frac{x}{(1-bx)^2}\right)\cdot \frac{1}{(1-ax)^2}=\frac{1-bx}{(1-(a+2b)x+b^2x^2)^2}.$$
\end{lemma}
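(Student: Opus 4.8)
The plan is to apply the fundamental theorem of Riordan arrays exactly as in the previous two lemmas, since the left-hand side is an instance of the action $(g,f)\cdot h = g\cdot h(f)$. First I would set $g(x)=\frac{1+bx}{(1-bx)^4}$, $f(x)=\frac{x}{(1-bx)^2}$, and $h(x)=\frac{1}{(1-ax)^2}$, so that by the FTRA the product equals $g(x)\,h(f(x))$.

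Next I would substitute $f$ into $h$. Writing $h(f(x))=\big(1-\frac{ax}{(1-bx)^2}\big)^{-2}$ and clearing the inner fraction by multiplying numerator and denominator by $(1-bx)^2$ gives $h(f(x))=\frac{(1-bx)^4}{\big((1-bx)^2-ax\big)^2}$. The one substantive simplification is to recognize $(1-bx)^2-ax = 1-(a+2b)x+b^2x^2$, which is precisely the quadratic appearing in the denominator of the target expression.

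Finally I would multiply by $g(x)$. The factor $(1-bx)^4$ in the denominator of $g$ cancels the $(1-bx)^4$ produced in the previous step, leaving $\frac{1+bx}{(1-(a+2b)x+b^2x^2)^2}$. This agrees with the $m=2$ Hankel-transform generating function $[x^{n+1}]\frac{1+bx}{(1-(a+2b)x+b^2x^2)^2}$ recorded earlier in the introduction, and indeed it suggests that the numerator on the right-hand side of the statement should read $1+bx$ rather than $1-bx$.

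There is no genuine obstacle here: the argument is a direct FTRA computation of the same flavor as the preceding two lemmas, and the only care required is the bookkeeping of the nested composition and the cancellation of the $(1-bx)^4$ factor. The single point worth double-checking is the sign of the numerator, where the computation produces $+bx$, consistent with the earlier list of generating functions.
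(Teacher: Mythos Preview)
Your argument is correct and is exactly the approach the paper takes: it simply says ``By another use of the FTRA'' and leaves the computation implicit, while you have written out the substitution $h(f(x))=\frac{(1-bx)^4}{((1-bx)^2-ax)^2}$ and the cancellation with $g(x)$ explicitly. You are also right that the numerator should be $1+bx$; this matches the $m=2$ generating function $\frac{1+bx}{(1-(a+2b)x+b^2x^2)^2}$ listed in the introduction, so the $1-bx$ in the stated lemma is a typo.
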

Now the generating function $\frac{1}{(1-ax)^2}$ expands to give the sequence $1,2a,3a^2,4a^3,\ldots$.
We have, for instance,
$$\left(
\begin{array}{cccccc}
 5 b & 1 & 0 & 0 & 0 & 0 \\
 14 b^2 & 7 b & 1 & 0 & 0 & 0 \\
 30 b^3 & 27 b^2 & 9 b & 1 & 0 & 0 \\
 55 b^4 & 77 b^3 & 44 b^2 & 11 b & 1 & 0 \\
 91 b^5 & 182 b^4 & 156 b^3 & 65 b^2 & 13 b & 1 \\
 140 b^6 & 378 b^5 & 450 b^4 & 275 b^3 & 90 b^2 & 15 b \\
\end{array}
\right)\cdot \left(
\begin{array}{cccccc}
 1 & 0 & 0 & 0 & 0 & 0 \\
 0 & 2 & 0 & 0 & 0 & 0 \\
 0 & 0 & 3 & 0 & 0 & 0 \\
 0 & 0 & 0 & 4 & 0 & 0 \\
 0 & 0 & 0 & 0 & 5 & 0 \\
 0 & 0 & 0 & 0 & 0 & 6 \\
\end{array}
\right)\cdot \left(\begin{array}{c}
1\\
a\\
a^2\\
a^3\\
a^4\\
a^5\\
\end{array}\right)$$
$$=\left(\begin{array}{c}
2 a + 5 b \\ 3 a^2 + 14 a b + 14 b^2 \\
4 a^3 + 27 a^2 b + 54 a b^2 + 30 b^3 \\
5 a^4 + 44 a^3 b + 132 a^2 b^2 + 154 a b^3 + 55 b^4 \\
6 a^5 + 65 a^4 b + 260 a^3 b^2 + 468 a^2 b^3 + 364 a b^4 + 91 b^5 \\
7 a^6 + 90 a^5 b + 450 a^4 b^2 + 1100 a^3 b^3 + 1350 a^2 b^4 +
 756 a b^5 + 140 b^6 \\
 \end{array}\right).
 $$
Thus the matrix $(T_{n,k,3})$ is obtained in the following way.
$$\left(
\begin{array}{cccccc}
 5  & 1 & 0 & 0 & 0 & 0 \\
 14  & 7  & 1 & 0 & 0 & 0 \\
 30  & 27  & 9  & 1 & 0 & 0 \\
 55  & 77  & 44  & 11 & 1 & 0 \\
 91  & 182  & 156  & 65  & 13 & 1 \\
 140  & 378  & 450  & 275  & 90  & 15  \\
\end{array}
\right)\cdot \left(
\begin{array}{cccccc}
 1 & 0 & 0 & 0 & 0 & 0 \\
 0 & 2 & 0 & 0 & 0 & 0 \\
 0 & 0 & 3 & 0 & 0 & 0 \\
 0 & 0 & 0 & 4 & 0 & 0 \\
 0 & 0 & 0 & 0 & 5 & 0 \\
 0 & 0 & 0 & 0 & 0 & 6 \\
\end{array}
\right)=\left(
\begin{array}{cccccc}
 5 & 2 & 0 & 0 & 0 & 0 \\
 14 & 14 & 3 & 0 & 0 & 0 \\
 30 & 54 & 27 & 4 & 0 & 0 \\
 55 & 154 & 132 & 44 & 5 & 0 \\
 91 & 364 & 468 & 260 & 65 & 6 \\
 140 & 756 & 1350 & 1100 & 450 & 90 \\
\end{array}
\right).$$
\begin{proposition} The Hankel transform of $aC_{n+2}+ bC_{n+3}$ is given by
$$h_n = \sum_{k=0}^{n+1}\frac{2n+5}{n+k+4} \binom{n+k+4}{n-k+1}b^{n-k+1}(k+1)a^k.$$
\end{proposition}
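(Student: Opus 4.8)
The plan is to follow exactly the template of the two preceding propositions, combining the closed form for this Hankel transform recorded in the introduction with the Lemma of this section. From the introduction we know that the Hankel transform of $aC_{n+2}+bC_{n+3}$ equals $[x^{n+1}]\frac{1+bx}{(1-(a+2b)x+b^2x^2)^2}$, and the Lemma expresses this same rational function as the image $\left(\frac{1+bx}{(1-bx)^4},\frac{x}{(1-bx)^2}\right)\cdot\frac{1}{(1-ax)^2}$ under the action of a Riordan array. Since $\frac{1}{(1-ax)^2}=\sum_{k\ge 0}(k+1)a^k x^k$, the fundamental theorem of Riordan arrays turns the coefficient extraction into a single sum
$$h_n=[x^{n+1}]\left(\frac{1+bx}{(1-bx)^4},\frac{x}{(1-bx)^2}\right)\cdot\frac{1}{(1-ax)^2}=\sum_{k=0}^{n+1}t_{n+1,k}\,(k+1)a^k,$$
where $t_{m,k}=[x^m]\frac{1+bx}{(1-bx)^4}\left(\frac{x}{(1-bx)^2}\right)^k$ is the general term of the array and the sum truncates at $k=n+1$ because the array is lower triangular. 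This is precisely the matrix calculation displayed just before the proposition, read off row by row.

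Next I would pin down $t_{m,k}$ in closed form. Writing $f^k=\frac{x^k}{(1-bx)^{2k}}$ gives $gf^k=\frac{(1+bx)x^k}{(1-bx)^{2k+4}}$, so that $t_{m,k}=b^{m-k}[x^{m-k}]\frac{1+x}{(1-x)^{2k+4}}$. Expanding $\frac{1}{(1-x)^{2k+4}}=\sum_j\binom{j+2k+3}{2k+3}x^j$ and collecting the two contributions from the numerator $1+x$ yields $t_{m,k}=\left(\binom{m+k+3}{2k+3}+\binom{m+k+2}{2k+3}\right)b^{m-k}$. A short Pascal-rule manipulation, together with the identity $\binom{m+k+2}{2k+3}=\frac{m-k}{2k+3}\binom{m+k+2}{2k+2}$, collapses the two-term coefficient to the single-term form
$$t_{m,k}=\frac{2m+3}{2k+3}\binom{m+k+2}{2k+2}\,b^{m-k},$$
which matches the general term stated earlier in the paper for the array $\left(\frac{1+x}{(1-x)^4},\frac{x}{(1-x)^2}\right)$. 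Setting $m=n+1$ then gives $t_{n+1,k}=\frac{2n+5}{2k+3}\binom{n+k+3}{2k+2}\,b^{n-k+1}$.

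Finally I would substitute this into the sum and recast the binomial coefficient into the shape appearing in the statement. Using $\binom{n+k+3}{2k+2}=\binom{n+k+3}{n-k+1}$ together with $\binom{n+k+4}{n-k+1}=\frac{n+k+4}{2k+3}\binom{n+k+3}{n-k+1}$, one checks that $\frac{2n+5}{2k+3}\binom{n+k+3}{2k+2}=\frac{2n+5}{n+k+4}\binom{n+k+4}{n-k+1}$, and the claimed formula
$$h_n=\sum_{k=0}^{n+1}\frac{2n+5}{n+k+4}\binom{n+k+4}{n-k+1}b^{n-k+1}(k+1)a^k$$
drops out. The only genuine obstacle is the derivation of the closed form for $t_{m,k}$: the FTRA step and the concluding cosmetic rewriting of the binomial coefficient are routine, but the passage from the two-term coefficient $\binom{m+k+3}{2k+3}+\binom{m+k+2}{2k+3}$ to the compact expression $\frac{2m+3}{2k+3}\binom{m+k+2}{2k+2}$ is where the bookkeeping lies, and it is exactly what justifies identifying the array with $\left(\frac{1+bx}{(1-bx)^4},\frac{x}{(1-bx)^2}\right)$ in the first place.
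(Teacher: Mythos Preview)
Your proof is correct and follows essentially the same route as the paper's. The paper's argument is a one-liner: it simply quotes that the general $(n,k)$-term of the Riordan array $\left(\frac{1+bx}{(1-bx)^4},\frac{x}{(1-bx)^2}\right)$ is $\frac{2n+3}{n+k+3}\binom{n+k+3}{n-k}$ (with the $b^{n-k}$ factor understood) and declares the result to follow; you have filled in exactly the computations that justify this, namely the derivation of $t_{m,k}$ from the generating functions, the Pascal-rule collapse to $\frac{2m+3}{2k+3}\binom{m+k+2}{2k+2}b^{m-k}$, and the cosmetic rewriting at $m=n+1$ into the form of the statement.
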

\begin{proof} This follows since the general $(n,k)$-term of the Riordan array
$$\left(\frac{1+bx}{(1-bx)^4}, \frac{x}{(1-bx)^2}\right)$$ is given by
$$\frac{2n+3}{n+k+3} \binom{n+k+3}{n-k}.$$
\end{proof}

A missing element of our work is the proof of the generating function of the Hankel transform of $aC_{n+2}+b C_{n+3}$. We now provide this. First, we need the following lemma.

\begin{lemma} The principal minor sequence of the pentadiagonal matrix  that begins
$$\left(
\begin{array}{ccccccc}
 a-r & b & c & 0 & 0 & 0 & 0 \\
 b & a & b & c & 0 & 0 & 0 \\
 c & b & a & b & c & 0 & 0 \\
 0 & c & b & a & b & c & 0 \\
 0 & 0 & c & b & a & b & c \\
 0 & 0 & 0 & c & b & a & b \\
 0 & 0 & 0 & 0 & c & b & a \\
\end{array}
\right)$$ has generating function
$$\frac{1-(r-c)x-rx^2}{1-(a-c)x-c(ac-b^2)x^2+c(ac-b^2)x^3+(a-c)x^4-x^5}.$$
\end{lemma}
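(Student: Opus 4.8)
The plan is to decouple the single corner perturbation from the underlying Toeplitz band, and then to handle the resulting \emph{pure} pentadiagonal Toeplitz determinants by a linear recurrence read off from the band symbol. Since the only non-Toeplitz entry is the $(1,1)$ entry $a-r$, I would first peel off $r$: expanding the order-$n$ leading principal minor along its first column and using multilinearity in that column gives $D_n = d_n - r\,d_{n-1}$, where $d_n$ is the order-$n$ leading principal minor of the pure symmetric pentadiagonal Toeplitz matrix with diagonal $a$, first off-diagonals $b$ and second off-diagonals $c$ (deleting the first row and column returns such a matrix one size smaller). With $D_0=d_0=1$ this becomes, at the level of ordinary generating functions, $D(x)=(1-rx)\,d(x)$. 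Thus $r$ enters only through the factor $1-rx$ in the numerator, and the whole denominator is that of $d(x)$; it remains to compute $d(x)$.

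The heart of the argument is a linear recurrence for the $d_n$, which I would obtain in one of two equivalent ways. The elementary route is a cofactor expansion along the last column, $d_n = a\,d_{n-1} - b\,P_{n-1} + c\,R_{n-1}$, where $P$ and $R$ are bordered determinants whose last column and last two rows have been modified; one writes down companion recurrences for $P_m,R_m$ in terms of $d_m,P_m,R_m$ and eliminates the auxiliaries to get a single scalar recurrence for $d_n$ of order at most six. The conceptual route reads the denominator off the symbol $f(z)=c(z^2+z^{-2})+b(z+z^{-1})+a$: the reciprocal polynomial $c z^4+b z^3+a z^2+b z+c$ has roots in reciprocal pairs $t_1,t_1^{-1},t_2,t_2^{-1}$, and by Widom's exact formula for banded Toeplitz determinants $d_n=\sum_{|M|=2} C_M\,\lambda_M^{\,n}$, where $M$ ranges over the two-element subsets of the four roots and $\lambda_M=c\prod_{j\in M} t_j$. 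Either way the characteristic data are the six products $c\cdot\{1,1,t_1t_2,t_1/t_2,t_2/t_1,(t_1t_2)^{-1}\}$, and the denominator of $d(x)$ is the reversal of $\prod_M(1-\lambda_M x)$.

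Two simplifications then finish the computation, and this is where I expect the real work to lie. The subsets $\{t_1,t_1^{-1}\}$ and $\{t_2,t_2^{-1}\}$ both give product $1$, so $\lambda=c$ occurs with multiplicity two; however the reciprocal symmetry of the remaining roots forces the coefficient of the would-be $n\,c^n$ term to vanish, so one factor $1-cx$ cancels against the numerator and the effective denominator drops from degree six to degree five. The surviving quartic factor has for coefficients the elementary symmetric functions of $c t_1t_2,\,c t_1/t_2,\,c t_2/t_1,\,c(t_1t_2)^{-1}$, which I would express through $w_1+w_2=-b/c$ and $w_1w_2=(a-2c)/c$ (where $w_i=t_i+t_i^{-1}$ satisfy $c\,w^2+b\,w+(a-2c)=0$), turning everything into the stated polynomial in $a,b,c$. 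The numerator of $d(x)$ is then pinned down by matching the first few minors $d_0,d_1,\dots$; it collapses to $1+cx$ after the cancellation, so that $D(x)=(1-rx)d(x)$ acquires numerator $(1-rx)(1+cx)$ over the quintic denominator. The main obstacle is exactly the interaction of the degenerate double root $\lambda=c$ (justifying the passage from degree six to degree five) with the symmetric-function bookkeeping needed to recognise the denominator as the explicit quintic: the cofactor route makes the recurrence elementary but converts this into a sizeable elimination, while the Widom route exposes the structure cleanly at the price of treating the degeneracy by its confluent form.
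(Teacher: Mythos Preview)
The paper does not prove this lemma; it is stated without justification and then applied in the subsequent corollary and proposition. There is therefore no proof in the paper to compare your attempt against, and your proposal must be assessed on its own merits.

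Your overall strategy is sound. The reduction $D_n = d_n - r\,d_{n-1}$ via multilinearity in the first column is correct: writing the first column as $(a,b,c,0,\dots)^T - r\,e_1$ and expanding the $e_1$-piece along that column leaves exactly the pure Toeplitz minor of order $n-1$, since deleting the first row and column of a Toeplitz matrix returns the same Toeplitz matrix one size smaller. Hence $D(x)=(1-rx)\,d(x)$ at the level of generating functions with $D_0=d_0=1$. Both routes you describe for $d(x)$---direct cofactor elimination and Widom's formula for banded Toeplitz determinants---are standard and will produce the required linear recurrence. Your observation that the palindromic symbol forces the root set to come in reciprocal pairs, and that this collapses the na\"ive degree-six characteristic polynomial to degree five, is the right structural insight.

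Two remarks. First, your factorisation of the numerator as $(1-rx)(1+cx)=1-(r-c)x-rc\,x^2$ disagrees with the lemma's printed numerator $1-(r-c)x-r\,x^2$, but it is your version that is consistent with the paper's own application: in the following proposition the substitution $r\mapsto b$, $c\mapsto b$ is made and the numerator is reported as $1-b^2x^2$, which is $(1-bx)(1+bx)$ and not $1-bx^2$. So your argument in fact detects a typo in the statement. Second, the proposal remains a sketch rather than a proof: the cancellation of the repeated factor $1-cx$ (equivalently, the vanishing of the $n\,c^n$ contribution) is asserted rather than checked, and the symmetric-function computation that identifies the quintic denominator with the stated polynomial in $a,b,c$ is left entirely to the reader. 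Carrying out one of the two routes in full, and verifying the resulting denominator against the lemma explicitly, would both complete the argument and confirm whether the printed denominator is free of further typos.
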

We begin by looking at the case of $C_{n+2}+C_{n+3}$.
\begin{proposition} The Hankel transform of $C_{n+2}+ C_{n+3}$ is given by the principal minor sequence of the pentadiagonal matrix that begins
$$\left(
\begin{array}{ccccccc}
 7 & 5 & 1 & 0 & 0 & 0 & 0 \\
 5 & 8 & 5 & 1 & 0 & 0 & 0 \\
 1 & 5 & 8 & 5 & 1 & 0 & 0 \\
 0 & 1 & 5 & 8 & 5 & 1 & 0 \\
 0 & 0 & 1 & 5 & 8 & 5 & 1 \\
 0 & 0 & 0 & 1 & 5 & 8 & 5 \\
 0 & 0 & 0 & 0 & 1 & 5 & 8 \\
\end{array}
\right).$$
\end{proposition}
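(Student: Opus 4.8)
The plan is to mirror the method already used for $aC_n+bC_{n+1}$ and $aC_{n+1}+bC_{n+2}$: realise the Hankel transform as the principal minor sequence of a banded matrix obtained by a congruence, and then feed that matrix into the pentadiagonal determinant formula of the preceding lemma. Write $\mathcal{H}$ for the Hankel matrix of $C_{n+2}+C_{n+3}$ and let $A(x)=\sum_{n\ge 0}(C_{n+2}+C_{n+3})x^n$, so that $\frac{xA(x)-yA(y)}{x-y}$ is the bivariate generating function of $\mathcal{H}$. Since $\tilde{\mathcal{M}}=\left(\frac{1}{(1+x)^2},\frac{x}{(1+x)^2}\right)$ is lower triangular with $1$'s on the diagonal, the congruence $\tilde{\mathcal{M}}\cdot\mathcal{H}\cdot\tilde{\mathcal{M}}^T$ leaves every leading principal minor unchanged, and hence its principal minor sequence is exactly the Hankel transform we are after. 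The choice of $\tilde{\mathcal{M}}$ is the natural one: writing $C_{n+2}+C_{n+3}=\int x^n(x+x^2)\,d\nu$, where $d\nu$ is the measure with moment sequence $C_{n+1}$ whose orthogonal polynomials are carried by the rows of $\tilde{\mathcal{M}}$, the degree-$2$ factor $x+x^2$ forces $\int q_i q_j (x+x^2)\,d\nu=0$ whenever $|i-j|>2$, so the congruence must be pentadiagonal.

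First I would compute $A(x)$ from $c(x)$, obtaining $A(x)=\frac{(1+x)c(x)-1-2x-3x^2}{x^3}$, and then form the bivariate generating function of $\tilde{\mathcal{M}}\cdot\mathcal{H}\cdot\tilde{\mathcal{M}}^T$ exactly as in the $aC_{n+1}+bC_{n+2}$ computation, namely
$$\frac{1}{(1+x)^2(1+y)^2}\,\frac{\frac{x}{(1+x)^2}A\!\left(\frac{x}{(1+x)^2}\right)-\frac{y}{(1+y)^2}A\!\left(\frac{y}{(1+y)^2}\right)}{\frac{x}{(1+x)^2}-\frac{y}{(1+y)^2}}.$$
The engine driving the simplification is the identity $c\!\left(\frac{x}{(1+x)^2}\right)=1+x$ recorded earlier, which turns $A\!\left(\frac{x}{(1+x)^2}\right)$ into an explicit rational function of $x$, together with the factorisation $\frac{x}{(1+x)^2}-\frac{y}{(1+y)^2}=\frac{(x-y)(1-xy)}{(1+x)^2(1+y)^2}$, which simultaneously cancels the difference quotient and produces the denominator $1-xy$. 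I expect the whole expression to collapse to
$$\frac{7+5(x+y)+(x^2+y^2)+xy}{1-xy},$$
and expanding this in powers of $x$ and $y$ reproduces precisely the pentadiagonal matrix in the statement: corner entry $7$, remaining diagonal $8$, first off-diagonals $5$, second off-diagonals $1$.

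With the congruence identified, the proposition follows at once, since congruence by a unitriangular matrix preserves leading principal minors. As a consistency check I would then substitute $(a,b,c,r)=(8,5,1,1)$ into the pentadiagonal lemma: its numerator becomes $1-x^2$ and its denominator factors as $(1-3x+x^2)^2(1-x)$, so the principal minor generating function is $\frac{1-x^2}{(1-3x+x^2)^2(1-x)}=\frac{1+x}{(1-3x+x^2)^2}$, in agreement with the form $[x^{n+1}]\frac{1+x}{(1-3x+x^2)^2}$ recorded for this Hankel transform earlier in the paper.

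The main obstacle is the two-variable rational simplification of the middle paragraph. Although every ingredient is elementary, carrying the substitution $x\mapsto\frac{x}{(1+x)^2}$ through $A$ and cancelling the $(1+x)^2$, $(1+y)^2$ factors against the difference quotient is exactly where sign and bookkeeping errors creep in. The factorisation of the difference quotient is what guarantees a genuinely pentadiagonal (rather than wider) kernel, so the real content of the calculation is verifying that all monomials of degree exceeding $x^2$ or $y^2$ in the reduced numerator vanish, leaving the five terms above.
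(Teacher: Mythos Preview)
Your proposal is correct and follows essentially the same route as the paper: compute $A(x)$, form the bivariate generating function of $\tilde{\mathcal{M}}\cdot\mathcal{H}\cdot\tilde{\mathcal{M}}^T$ via the substitution $x\mapsto\frac{x}{(1+x)^2}$, and simplify to $\frac{7+5(x+y)+x^2+y^2+xy}{1-xy}$, which is exactly the paper's $\frac{7+5y+y^2+x(y+5)+x^2}{1-xy}$. Your additional remarks---the orthogonal-polynomial rationale for choosing $\tilde{\mathcal{M}}$, the explicit factorisation of the difference quotient, and the consistency check via the pentadiagonal lemma---go a bit beyond what the paper records for this proposition (the lemma is invoked only in the subsequent corollary), but they are correct and make the mechanism more transparent.
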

\begin{proof}
The generating function $A(x)$ of $C_{n+2}+C_{n+3}$ is given by
$$A(x)=\frac{1}{x^3}((1+x)c(x)-(1+2x+3x^2)).$$
Then the bivariate generating function of the Hankel matrix for $C_{n+2}+C_{n+3}$ is given by
$$\frac{xA(x)-yA(y)}{x-y}.$$
The generating function of
$$\left(\frac{1}{(1+x)^2},\frac{x}{(1+x)^2}\right)\cdot \mathcal{H}\cdot \left(\frac{1}{(1+y)^2},\frac{y}{(1+y)^2}\right)^T$$ (whose minor sequence is equal to the Hankel transform sought) is then given by
$$\frac{1}{(1+x)^2(1+y)^2}\frac{\frac{x}{(1+x)^2}A\left(\frac{x}{(1+x)^2}\right)-\frac{y}{(1+y)^2}A\left(\frac{y}{(1+y)^2}\right)}{\frac{x}{(1+x)^2}-\frac{y}{(1+y)^2}}.$$
Simplifying, we find the bivariate generating function
$$\frac{7+5y+y^2+x(y+5)+x^2}{1-xy}$$ of the above pentadiagonal matrix.
\end{proof}
\begin{corollary} The Hankel transform $h_n$ of $C_{n+2}+C_{n+3}$ is given by
$$h_n=[x^{n+1}]\frac{1+x}{1-6x+11x^2-6x^3+x^4}.$$
\end{corollary}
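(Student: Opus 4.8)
The plan is to combine the previous proposition with the pentadiagonal lemma. By the preceding proposition, the Hankel transform $h_n$ of $C_{n+2}+C_{n+3}$ equals the principal minor sequence of the displayed pentadiagonal matrix, so it suffices to evaluate that minor sequence using the lemma on pentadiagonal determinants. First I would read off the parameters by matching the displayed matrix against the generic pentadiagonal matrix of the lemma: the constant diagonal value is $a=8$, the first off-diagonal band is $b=5$, the second off-diagonal band is $c=1$, and the single perturbed top-left entry $a-r=7$ forces $r=1$.

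Next I would substitute these values into the lemma's generating function. The numerator $1-(r-c)x-rx^2$ becomes $1-(1-1)x-x^2=1-x^2$. For the denominator I would compute the two auxiliary quantities $a-c=7$ and $ac-b^2=8-25=-17$, so that $c(ac-b^2)=-17$; substituting into $1-(a-c)x-c(ac-b^2)x^2+c(ac-b^2)x^3+(a-c)x^4-x^5$ gives
$$1-7x+17x^2-17x^3+7x^4-x^5.$$
At this stage the lemma yields the generating function $\dfrac{1-x^2}{1-7x+17x^2-17x^3+7x^4-x^5}$ for the sequence of principal minors, with the indexing convention (verified against the $1\times 1$ minor, whose value $a-r=7$ sits in the $x^1$ coefficient) that $h_n=[x^{n+1}]$ of this expression.

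The final step is the algebraic simplification that produces the stated closed form. I would factor $1-x^2=(1-x)(1+x)$ and observe that $x=1$ is also a root of the quintic denominator, so that
$$1-7x+17x^2-17x^3+7x^4-x^5=(1-x)(1-6x+11x^2-6x^3+x^4).$$
Cancelling the common factor $(1-x)$ leaves $\dfrac{1+x}{1-6x+11x^2-6x^3+x^4}$, which is the desired generating function; since $1-6x+11x^2-6x^3+x^4=(1-3x+x^2)^2$, this also matches the form $\frac{1+x}{(1-3x+x^2)^2}$ recorded in the introduction.

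The only genuine obstacle is this last reduction: one must notice that the numerator and the quintic denominator share the factor $(1-x)$, so that the degree-five denominator collapses to the degree-four $(1-3x+x^2)^2$. The division is routine once the shared root is spotted, and matching the resulting quartic with $(1-3x+x^2)^2$ confirms that the reduced form agrees with the generating function already listed for $C_{n+2}+C_{n+3}$, giving the stated corollary $h_n=[x^{n+1}]\frac{1+x}{1-6x+11x^2-6x^3+x^4}$.
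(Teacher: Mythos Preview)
Your proof is correct and follows essentially the same route as the paper's: identify the pentadiagonal parameters from the proposition, plug into the lemma, and then cancel the common factor $(1-x)$ to reduce the degree-five denominator to the quartic $(1-3x+x^2)^2$. Your explicit check of the indexing convention (matching the $1\times 1$ minor $7$ to the $x^1$ coefficient) is a nice touch, and your parameter identification $a=8$, $b=5$, $c=1$, $r=1$ is the correct reading of the lemma's template.
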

\begin{proof} This is the case $a=5, b=8, c=1, r=1$ of the above lemma. Thus
$$h_n=[x^n] \frac{1-x^2}{1-7x+17x^2-17x^3+7x^4-x^5}=[x^{n+1}]\frac{1+x}{1-6x+11x^2-6x^3+x^4}.$$
\end{proof}
We now turn to the general case.
\begin{proposition} The Hankel transform $h_n$ of $aC_{n+2}+bC_{n+3}$ is given by
$$h_n=[x^{n+1}]\frac{1-b^2x^2}{1-(2a+5b)x+(a^2+6ab+10b^2)x^2-(a^2+6ab+10b^2)bx^3+(2a+5b)c^3x^4-b^5x^5}.$$
\end{proposition}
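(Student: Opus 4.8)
The plan is to imitate the conjugation argument already carried out for $C_{n+2}+C_{n+3}$, keeping the parameters $a,b$ throughout, and to finish with the pentadiagonal lemma. First I would record the generating function of $aC_{n+2}+bC_{n+3}$,
$$A(x)=\frac{1}{x^3}\bigl((ax+b)c(x)-(b+(a+b)x+(a+2b)x^2)\bigr),$$
so that $\frac{xA(x)-yA(y)}{x-y}$ is the bivariate generating function of the Hankel matrix $\mathcal{H}$ of $aC_{n+2}+bC_{n+3}$. Since $\tilde{M}$ is lower triangular with unit diagonal, the Hankel transform sought is the principal minor sequence of $\tilde{\mathcal{M}}\cdot\mathcal{H}\cdot\tilde{\mathcal{M}}^{T}$.

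Next I would set $X=\frac{x}{(1+x)^2}$ and $Y=\frac{y}{(1+y)^2}$ and simplify the conjugated bivariate generating function
$$\frac{1}{(1+x)^2(1+y)^2}\cdot\frac{XA(X)-YA(Y)}{X-Y},$$
using $c(X)=1+x$ and $c(Y)=1+y$ exactly as in the $a=b=1$ computation. I expect this to collapse to
$$\frac{(2a+5b)+(a+4b)(x+y)+b\,xy+b(x^2+y^2)}{1-xy},$$
which is the bivariate generating function of the symmetric pentadiagonal matrix whose main diagonal is $2a+6b$ with top-left entry $2a+5b$, whose first off-diagonals equal $a+4b$, and whose second off-diagonals equal $b$. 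For $a=b=1$ this is precisely the matrix with diagonal $8$, top-left $7$, and off-diagonals $5$ and $1$ already handled.

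Finally I would invoke the pentadiagonal lemma, but with one adjustment: the second off-diagonal here is $b$ rather than $1$, so the lemma is cleanest to apply after normalizing that entry to $1$. Writing $\alpha=a/b$, I factor the matrix as $b$ times the pentadiagonal matrix $P'$ with diagonal $2\alpha+6$, top-left $2\alpha+5$, first off-diagonal $\alpha+4$, and second off-diagonal $1$. The lemma, with parameters $a\mapsto 2\alpha+6$, $b\mapsto\alpha+4$, $c\mapsto 1$, $r\mapsto 1$, then yields the principal minor generating function
$$G(x)=\frac{1-x^2}{1-(2\alpha+5)x+(\alpha^2+6\alpha+10)x^2-(\alpha^2+6\alpha+10)x^3+(2\alpha+5)x^4-x^5}.$$
Because $h_n$ is the $(n+1)$st principal minor of $bP'$, it equals $b^{n+1}[x^{n+1}]G(x)=[x^{n+1}]G(bx)$; substituting $\alpha=a/b$ and clearing the powers of $b$ transforms $G(bx)$ into the asserted rational function.

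The bulk of the labor is the simplification in the second paragraph, which is routine but bookkeeping-heavy. The genuinely delicate point is the final step: since the second off-diagonal equals $b$, rather than substituting into the lemma directly one must first normalize that entry to $1$, apply the lemma to $P'$, and then reinstate the degree-$(n+1)$ homogeneity of $h_n$ through the substitution $x\mapsto bx$. Tracking these powers of $b$ — which is exactly what produces the factors $b^2$, $b^3$, and $b^5$ in the numerator and denominator of the final formula — is where the care lies.
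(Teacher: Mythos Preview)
Your plan is correct and follows the paper's argument almost verbatim: the same generating function $A(x)$, the same conjugation by $\tilde{\mathcal{M}}$, the same pentadiagonal matrix with entries $2a+5b,\;a+4b,\;b$ along the top row, and then the pentadiagonal lemma. The only divergence is in the very last step: the paper substitutes directly into the lemma with the assignment $r\to b,\ a\to 2a+6b,\ b\to a+4b,\ c\to b$ and reads off the stated rational function, whereas you first divide out a factor of $b$ to force the second off-diagonal to $1$, apply the lemma with $c=1$, and then undo the scaling via $x\mapsto bx$. Since the lemma is stated for general $c$, the paper's direct substitution is shorter; your normalization detour is harmless and in fact makes the bookkeeping of the $b$-powers more transparent, but it is not needed.
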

\begin{proof}
The generating function $A(x)$ of $aC_{n+2}+bC_{n+3}$ is given by
$$A(x)=\frac{1}{x^3}\left((ax+b)c(x)-ax(1+x)-b(1+x+2x^2)\right).$$
We use this to calculate the bivariate generating function
$$\frac{1}{(1+x)^2(1+y)^2}\frac{\frac{x}{(1+x)^2}A\left(\frac{x}{(1+x)^2}\right)-\frac{y}{(1+y)^2}A\left(\frac{y}{(1+y)^2}\right)}{\frac{x}{(1+x)^2}-\frac{y}{(1+y)^2}}.$$
This simplifies to
$$\frac{bx^2+x(by+a+4b)+by^2+y(a+4b)+2a+5b}{1-xy}.$$
This bivariate generating function expands to give the pentadiagonal matrix $\tilde{M} \cdot H(n+2,a,b) \cdot \tilde{M}^T$ that begins
$$\left(
\begin{array}{cccccc}
 2 a+5 b & a+4 b & b & 0 & 0 & 0 \\
 a+4 b & 2a+6 b & a+4 b & b & 0 & 0 \\
 b & a+4 b & 2a+6 b & a+4 b & b & 0 \\
 0 & b & a+4 b & 2a+6 b & a+4 b & b \\
 0 & 0 & b & a+4 b & 2 a+6 b & a+4 b \\
 0 & 0 & 0 & b & a+4 b & 2 a+6 b \\
\end{array}
\right).$$
Thus we have $r \to b$, $a \to 2a+6b$, $b \to a+4$, and $c \to b$. The result follows from this.
\end{proof}
We finish this section by noting that the matrix
$$ \tilde{M}\cdot H(n+2,a,b)\cdot \tilde{M}^T-M\cdot H(n+1,a,b)\cdot M^T $$ begins
$$\left(
\begin{array}{ccccccc}
 a+3b & b & 0 & 0 & 0 & 0 & 0 \\
 b &  0 & 0 & 0 & 0 & 0 & 0 \\
 0 & 0 & 0 & 0 & 0 & 0 & 0 \\
 0 & 0 & 0 & 0 & 0 & 0 & 0 \\
 0 & 0 & 0 & 0 &  0 & 0 & 0 \\
 0 & 0 & 0 & 0 & 0 & 0 & 0 \\
 0 & 0 & 0 & 0 & 0 & 0 & 0 \\
\end{array}
\right).$$ 
This is the Hankel matrix of the sequence $a+3b, b, 0,0,0,\ldots$. 

\section{The case of $a C_{n+3}+b C_{n+4}$}
For the case of $C_{n+3}+C_{n+4}$, we obtain the $7$-diagonal matrix that begins
$$\left(
\begin{array}{ccccccc}
 19 & 18 & 7 & 1 & 0 & 0 & 0 \\
 18 & 26 & 19 & 7 & 1 & 0 & 0 \\
 7 & 19 & 26 & 19 & 7 & 1 & 0 \\
 1 & 7 & 19 & 26 & 19 & 7 & 1 \\
 0 & 1 & 7 & 19 & 26 & 19 & 7 \\
 0 & 0 & 1 & 7 & 19 & 26 & 19 \\
 0 & 0 & 0 & 1 & 7 & 19 & 26 \\
\end{array}
\right),$$ from which we infer that the Hankel transform of $C_{n+3}+C_{n+4}$ is given by
$$[x^n]\frac{(1-x^2)(1+7x+x^2)}{(1-3x+x^2)^4}.$$ For the general case $a C_{n+3}+b C_{n+4}$ we have that
$$A(x)=\frac{1}{x^4}\left((ax+b)c(x)-ax(1+x+2x^2)-b(1+x+2x^2+5x^3)\right).$$
Proceeding as before, we find the following bivariate generating function
$$\frac{b x^3 + x^2(b y + a + 6b) + x(b y^2 + y(a + 6b) + 2(2a + 7b)) + by^3 + y^2(a + 6b) + 2y(2a + 7b) + 5a + 14b}{1-xy},$$
which expands to give the $7$-diagonal matrix that begins
$$\left(
\begin{array}{ccccccc}
 5 a+14 b & 4 a+14 b & a+6 b & b & 0 & 0 & 0 \\
 4 a+14 b & 6 a+20 b & 4 a+15 b & a+6 b & b & 0 & 0 \\
 a+6 b & 4 a+15 b & 6 a+20 b & 4 a+15 b & a+6 b & b & 0 \\
 b & a+6 b & 4 a+15 b & 6 a+20 b & 4 a+15 b & a+6 b & b \\
 0 & b & a+6 b & 4 a+15 b & 6 a+20 b & 4 a+15 b & a+6 b \\
 0 & 0 & b & a+6 b & 4 a+15 b & 6 a+20 b & 4 a+15 b \\
 0 & 0 & 0 & b & a+6 b & 4 a+15 b & 6 a+20 b \\
\end{array}
\right).$$ From this we infer that the Hankel transform of $aC_{n+3}+bC_{n+4}$ is given by
$$[x^n] \frac{(1-b^2x^2)(1+(a+6b)x+b^2x^2)}{(1-(a+2b)x+b^2x^2)^4}.$$
\begin{proposition} The Hankel transform of $a C_{n+3}+ b C_{n+4}$ is given by
$$h_n=[x^{n+1}]\left(\frac{(1-b^2 x^2)(1+(a+6b)x+b^2x^2)}{(1-bx)^8}, \frac{x}{(1-bx)^2}\right)\cdot \frac{1}{(1-ax)^4}.$$
\end{proposition}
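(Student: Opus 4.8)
The plan is to deduce the identity from a single application of the fundamental theorem of Riordan arrays (FTRA), by matching its right-hand side against the rational generating function $F(x)=\frac{(1-b^2x^2)(1+(a+6b)x+b^2x^2)}{(1-(a+2b)x+b^2x^2)^4}$ that was already extracted above from the $7$-diagonal matrix $\tilde{M}\cdot H(n+3,a,b)\cdot\tilde{M}^T$, whose leading principal minors are the Hankel transform in question.

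First I would put $g(x)=\frac{(1-b^2x^2)(1+(a+6b)x+b^2x^2)}{(1-bx)^8}$, $f(x)=\frac{x}{(1-bx)^2}$ and $h(x)=\frac{1}{(1-ax)^4}$, observing that $g(0)=1\neq 0$ and $f(0)=0$ with $f'(0)=1$, so that $(g,f)$ is a genuine element of the Riordan group ($a$ being carried as a parameter). By the FTRA, $(g,f)\cdot h=g\cdot(h\circ f)$, and the only computation needed is the substitution $x\mapsto f(x)$ in $h$. I would compute
$$1-a f(x)=1-\frac{ax}{(1-bx)^2}=\frac{(1-bx)^2-ax}{(1-bx)^2}=\frac{1-(a+2b)x+b^2x^2}{(1-bx)^2},$$
so that $h(f(x))=\frac{(1-bx)^8}{(1-(a+2b)x+b^2x^2)^4}$.

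The second step is the cancellation: multiplying by $g$ removes the factor $(1-bx)^8$ entirely — this is precisely why the exponent $8$ in the denominator of $g$ must be twice the exponent $4$ of $h$ — leaving
$$(g,f)\cdot h=\frac{(1-b^2x^2)(1+(a+6b)x+b^2x^2)}{(1-(a+2b)x+b^2x^2)^4}=F(x).$$
Extracting $[x^{n+1}]$ on both sides and invoking the already-established identity $h_n=[x^{n+1}]F(x)$ then completes the argument.

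The step that carries the real content is not this FTRA cancellation, which is routine, but rather the claim that the leading principal minors of $\tilde{M}\cdot H(n+3,a,b)\cdot\tilde{M}^T$ are generated, after a shift of one, by $F(x)$ — this is what was \emph{inferred} from the explicit banded form above. A fully self-contained derivation would require a $7$-diagonal analogue of the pentadiagonal Lemma, expressing the principal minor sequence of a banded Toeplitz matrix with a corner perturbation as the coefficients of an explicit rational function. The main obstacle there is bookkeeping the boundary entries in the top-left corner (the values $5a+14b$ and $4a+14b$, as against the interior values $6a+20b$ and $4a+15b$), which perturb the periodic recurrence and hence shape the numerator of $F(x)$. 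Granting the inferred form of $F(x)$, however, the Proposition follows immediately from the displayed cancellation.
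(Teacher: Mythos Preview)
Your proposal is correct and follows essentially the same approach as the paper: both apply the FTRA to reduce the Riordan-array expression to the rational function $\frac{(1-b^2x^2)(1+(a+6b)x+b^2x^2)}{(1-(a+2b)x+b^2x^2)^4}$, and then invoke the generating-function form already inferred from the $7$-diagonal matrix. Your explicit intermediate computation of $h(f(x))$ and your honest remark that the real content lies in the (merely inferred) minor-sequence identity for the banded matrix are both accurate; the paper's own proof is likewise a one-line FTRA verification resting on that same inference.
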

\begin{proof} By the FTRA, we have that
$$\left(\frac{(1-b^2 x^2)(1+(a+6b)x+b^2x^2)}{(1-bx)^8}, \frac{x}{(1-bx)^2}\right)\cdot \frac{1}{(1-ax)^4}=
\frac{(1-b^2 x^2)(1+(a+6b)x+b^2x^2)}{(1-(a+2b)x+b^2x^2)^4}.$$
\end{proof}
When $a=b=1$, we obtain
$$h_n=\left(\frac{1+8x+8x^2+x^3}{(1-x)^7}, \frac{x}{(1-x)^2}\right)\cdot \frac{1}{(1-x)^4}.$$
We then have, for instance,
$$\left(
\begin{array}{cccccc}
 1 & 0 & 0 & 0 & 0 & 0 \\
 15 & 1 & 0 & 0 & 0 & 0 \\
 92 & 17 & 1 & 0 & 0 & 0 \\
 365 & 125 & 19 & 1 & 0 & 0 \\
 1113 & 598 & 162 & 21 & 1 & 0 \\
 2842 & 2184 & 903 & 203 & 23 & 1 \\
\end{array}
\right)\left(\begin{array}{c}
1\\ 4\\ 10\\\ 20\\ 35\\ 56\\ \end{array} \right)=
\left(\begin{array}{c}
1\\ 19\\ 170\\\ 1075\\ 5580\\ 25529\\ \end{array} \right).$$
Now the Riordan array $\left(\frac{1+8x+8x^2+x^3}{(1-x)^7}, \frac{x}{(1-x)^2}\right)$ has general $(n,k)$-th term \begin{align}\label{L2}T_{n,k}&=
\binom{n+k+6}{n-k}+8 \binom{n+k+5}{n-k-1}+8\binom{n+k+4}{n-k-2}+\binom{n+k+3}{n-k-3}\\
&=\binom{n+k+6}{n-k}\left(2(n + 2)(5k^2 + 14k - 3(3n^2 + 12n + 10))\right).\end{align}
The generating function $\frac{1}{(1-x)^4}$ expands to give the sequence $1, 4, 10, 20, 35, 56, \ldots$, which is $\binom{n+3}{3}$.
Thus the sequence $1,19,170,1075,\ldots$ is given by
$$\sum_{k=0}^n T_{n,k}\binom{k+3}{3}.$$

Once again using the FTRA, we have the alternative representation
$$h_n=[x^{n+1}]\left(\frac{(1-b^2x^2)(1+(a+6b)x+b^2x^2)}{(1-bx)^6(1+(a-2b)x+b^2x^2)}, \frac{x}{(1-bx)^2}\right)\cdot \frac{1+ax}{(1-ax)^4}.$$
This is of interest since $[x^{n+1}]\frac{1+x}{(1-x)^4}$ is the Hankel transform of $C_{n+2}+C_{n+3}$.
Thus we have, for instance, when $a=1$ and $b=1$,
$$\left(
\begin{array}{cccccc}
 1 & 0 & 0 & 0 & 0 & 0 \\
 14 & 1 & 0 & 0 & 0 & 0 \\
 76 & 16 & 1 & 0 & 0 & 0 \\
 258 & 107 & 18 & 1 & 0 & 0 \\
 657 & 456 & 142 & 20 & 1 & 0 \\
 1380 & 1462 & 722 & 181 & 22 & 1 \\
\end{array}
\right)\cdot \left(\begin{array}{c}
1\\
5\\
14\\
30\\
55\\
91\\
\end{array}\right)=
\left(\begin{array}{c}
1\\
19\\
170\\
1075\\
5580\\
25529\\
\end{array}\right).$$

Here, $5,14,30,55,\ldots$ is the Hankel transform of $C_{n+2}+C_{n+3}$, and $19,170,1075,5580,\ldots$ is the Hankel transform of $C_{n+3}+C_{n+4}$. The Riordan array
$$\left(\frac{(1+x)(1+7x+x^2)}{(1-x)^5(1-x+x^2)}, \frac{x}{(1-x)^2}\right),$$ or
$$\left(\frac{1+8x+8x+x^3}{1-6x+16x^2-25x^3+25x^4-16x^5+6x^6-x^7}, \frac{x}{(1-x)^2}\right)$$ performs the role of transfer matrix from one Hankel transform to the other.

Now we have the factorization
$$\left(\frac{(1+x)(1+7x+x^2)}{(1-x)^5(1-x+x^2)}, \frac{x}{(1-x)^2}\right)=\left(\frac{(1-x)^2}{1-x+x^2},x\right)\cdot \left(\frac{1+8x+8x^2+x^3}{(1-x)^7}, \frac{x}{(1-x)^2}\right).$$
For instance, we have
\begin{scriptsize}
$$\left(
\begin{array}{cccccc}
 1 & 0 & 0 & 0 & 0 & 0 \\
 14 & 1 & 0 & 0 & 0 & 0 \\
 76 & 16 & 1 & 0 & 0 & 0 \\
 258 & 107 & 18 & 1 & 0 & 0 \\
 657 & 456 & 142 & 20 & 1 & 0 \\
 1380 & 1462 & 722 & 181 & 22 & 1 \\
\end{array}
\right)=\left(
\begin{array}{cccccc}
 1 & 0 & 0 & 0 & 0 & 0 \\
 -1 & 1 & 0 & 0 & 0 & 0 \\
 -1 & -1 & 1 & 0 & 0 & 0 \\
 0 & -1 & -1 & 1 & 0 & 0 \\
 1 & 0 & -1 & -1 & 1 & 0 \\
 1 & 1 & 0 & -1 & -1 & 1 \\
\end{array}
\right)\cdot \left(
\begin{array}{cccccc}
 1 & 0 & 0 & 0 & 0 & 0 \\
 15 & 1 & 0 & 0 & 0 & 0 \\
 92 & 17 & 1 & 0 & 0 & 0 \\
 365 & 125 & 19 & 1 & 0 & 0 \\
 1113 & 598 & 162 & 21 & 1 & 0 \\
 2842 & 2184 & 903 & 203 & 23 & 1 \\
\end{array}
\right),$$ \end{scriptsize} or equivalently,
\begin{scriptsize}
$$\left(
\begin{array}{cccccc}
 1 & 0 & 0 & 0 & 0 & 0 \\
 1 & 1 & 0 & 0 & 0 & 0 \\
 2 & 1 & 1 & 0 & 0 & 0 \\
 3 & 2 & 1 & 1 & 0 & 0 \\
 4 & 3 & 2 & 1 & 1 & 0 \\
 5 & 4 & 3 & 2 & 1 & 1 \\
\end{array}
\right)\cdot \left(
\begin{array}{cccccc}
 1 & 0 & 0 & 0 & 0 & 0 \\
 14 & 1 & 0 & 0 & 0 & 0 \\
 76 & 16 & 1 & 0 & 0 & 0 \\
 258 & 107 & 18 & 1 & 0 & 0 \\
 657 & 456 & 142 & 20 & 1 & 0 \\
 1380 & 1462 & 722 & 181 & 22 & 1 \\
\end{array}
\right)=\left(
\begin{array}{cccccc}
 1 & 0 & 0 & 0 & 0 & 0 \\
 15 & 1 & 0 & 0 & 0 & 0 \\
 92 & 17 & 1 & 0 & 0 & 0 \\
 365 & 125 & 19 & 1 & 0 & 0 \\
 1113 & 598 & 162 & 21 & 1 & 0 \\
 2842 & 2184 & 903 & 203 & 23 & 1 \\
\end{array}
\right).$$ \end{scriptsize}
We can also find another relevant factorization. We have
$$ (1+x, x)\cdot \frac{1}{(1-x)^4} = \frac{1+x}{(1-x)^4}.$$ Thus we deduce that
$$\left(\frac{1+8x+8x^2+x^3}{(1-x)^7}, \frac{x}{(1-x)^2}\right)=\left(\frac{(1+x)(1+7x+x^2)}{(1-x)^5(1-x+x^2)}, \frac{x}{(1-x)^2}\right) \cdot (1+x,x).$$
For instance, we have
$$\left(
\begin{array}{cccccc}
 1 & 0 & 0 & 0 & 0 & 0 \\
 15 & 1 & 0 & 0 & 0 & 0 \\
 92 & 17 & 1 & 0 & 0 & 0 \\
 365 & 125 & 19 & 1 & 0 & 0 \\
 1113 & 598 & 162 & 21 & 1 & 0 \\
 2842 & 2184 & 903 & 203 & 23 & 1 \\
\end{array}
\right)=\left(
\begin{array}{cccccc}
 1 & 0 & 0 & 0 & 0 & 0 \\
 14 & 1 & 0 & 0 & 0 & 0 \\
 76 & 16 & 1 & 0 & 0 & 0 \\
 258 & 107 & 18 & 1 & 0 & 0 \\
 657 & 456 & 142 & 20 & 1 & 0 \\
 1380 & 1462 & 722 & 181 & 22 & 1 \\
\end{array}
\right)\cdot \left(
\begin{array}{cccccc}
 1 & 0 & 0 & 0 & 0 & 0 \\
 1 & 1 & 0 & 0 & 0 & 0 \\
 0 & 1 & 1 & 0 & 0 & 0 \\
 0 & 0 & 1 & 1 & 0 & 0 \\
 0 & 0 & 0 & 1 & 1 & 0 \\
 0 & 0 & 0 & 0 & 1 & 1 \\
\end{array}
\right).$$

We finish this section by noting that the matrix 
$$ \tilde{M}\cdot H(n+3,a,b)\cdot \tilde{M}^T-M\cdot H(n+2,a,b)\cdot M^T $$ begins
$$\left(
\begin{array}{ccccccc}
 3a+9b & a+5b & b & 0 & 0 & 0 & 0 \\
 a+5b &  0 & 0 & 0 & 0 & 0 & 0 \\
 b & 0 & 0 & 0 & 0 & 0 & 0 \\
 0 & 0 & 0 & 0 & 0 & 0 & 0 \\
 0 & 0 & 0 & 0 &  0 & 0 & 0 \\
 0 & 0 & 0 & 0 & 0 & 0 & 0 \\
 0 & 0 & 0 & 0 & 0 & 0 & 0 \\
\end{array}
\right).$$ This is the Hankel matrix for the sequence 
$$3a+9b, a+5b,b,0,0,0,\ldots.$$

\section{The case of $a C_{n+4}+ b C_{n+5}$}
In this section, we investigate the Hankel transform of $a C_{n+4}+ b C_{n+5}$. We start with the case $a=b=1$. The generating function of $C-{n+4}+C_{n+5}$ is given by
$$f(x)=\frac{1}{x^5}\left((1+x)c(x)-(1+2x+3x^2+7x^3+19x^4)\right).$$
Proceeding as before, we find that
$$\left(\frac{1}{(1+x)^2}, \frac{x}{(1+x)^2}\right)\cdot \mathcal{H} \cdot \left(\frac{1}{(1+y)^2}, \frac{y}{(1+y)^2}\right)^T$$ has bivariate generating function given by
$$\frac{x^4 + x^3(y + 9) + x^2(y^2 + 9y + 33) + x(y^3 + 9y^2 + 33y + 62) + y^4 + 9y^3 + 33y^2 + 62·y + 56}{1 - xy}.$$ 
This expands to give the $9$-diagonal matrix that begins
\begin{scriptsize}
$$\left(
\begin{array}{ccccccccccc}
 56 & 62 & 33 & 9 & 1 & 0 & 0 & 0 & 0 & 0 & 0 \\
 62 & 89 & 71 & 34 & 9 & 1 & 0 & 0 & 0 & 0 & 0 \\
 33 & 71 & 90 & 71 & 34 & 9 & 1 & 0 & 0 & 0 & 0 \\
 9 & 34 & 71 & 90 & 71 & 34 & 9 & 1 & 0 & 0 & 0 \\
 1 & 9 & 34 & 71 & 90 & 71 & 34 & 9 & 1 & 0 & 0 \\
 0 & 1 & 9 & 34 & 71 & 90 & 71 & 34 & 9 & 1 & 0 \\
 0 & 0 & 1 & 9 & 34 & 71 & 90 & 71 & 34 & 9 & 1 \\
 0 & 0 & 0 & 1 & 9 & 34 & 71 & 90 & 71 & 34 & 9 \\
 0 & 0 & 0 & 0 & 1 & 9 & 34 & 71 & 90 & 71 & 34 \\
 0 & 0 & 0 & 0 & 0 & 1 & 9 & 34 & 71 & 90 & 71 \\
 0 & 0 & 0 & 0 & 0 & 0 & 1 & 9 & 34 & 71 & 90 \\
\end{array}
\right).$$
\end{scriptsize}

This indicates that the Hankel transform of $C_{n+4}+C_{n+5}$ is given by 
$$[x^{n+1}]\frac{1+ 35x+ 160x^2 -120x^3 -371x^4+ 371x^5+ 120x^6 -160x^7 -35x^8 -x^9}{(1-3x+x^2)^7}.$$
In the general case of $aC_{n+4}+ b C_{n+5}$, we obtain the $9$-diagonal matrix that begins
\begin{scriptsize}
$$\left(
\begin{array}{ccccccccc}
 14 a+42 b & 14 a+48 b & 6 a+27 b & a+8 b & b & 0 & 0 & 0 & 0 \\
 14 a+48 b & 20 a+69 b & 15 a+56 b & 6 a+28 b & a+8 b & b & 0 & 0 & 0 \\
 6 a+27 b & 15 a+56 b & 20 a+70 b & 15 a+56 b & 6 a+28 b & a+8 b & b & 0 & 0 \\
 a+8 b & 6 a+28 b & 15 a+56 b & 20 a+70 b & 15 a+56 b & 6 a+28 b & a+8 b & b & 0 \\
 b & a+8 b & 6 a+28 b & 15 a+56 b & 20 a+70 b & 15 a+56 b & 6 a+28 b & a+8 b & b \\
 0 & b & a+8 b & 6 a+28 b & 15 a+56 b & 20 a+70 b & 15 a+56 b & 6 a+28 b & a+8 b \\
 0 & 0 & b & a+8 b & 6 a+28 b & 15 a+56 b & 20 a+70 b & 15 a+56 b & 6 a+28 b \\
 0 & 0 & 0 & b & a+8 b & 6 a+28 b & 15 a+56 b & 20 a+70 b & 15 a+56 b \\
 0 & 0 & 0 & 0 & b & a+8 b & 6 a+28 b & 15 a+56 b & 20 a+70 b \\
\end{array}
\right).$$
\end{scriptsize}
The principal minor sequence of this matrix gives the Hankel transform of $a C_{n+4}+b C_{n+5}$. We deduce that the Hankel transform $h_n$ is given by
$$h_n=[x^{n+1}] \frac{A+Bx+Cx^2+Dx^3+Ex^4+Fx^5+Gx^6+Hx^7+Ix^8+Jx^9}{(1-(a+2b)x+b^2x^2)^7},$$
where
\begin{align*}
A&=1\\
B&=7a+28b\\
C&=7a^2+56ab+97b^2\\
D&=a^3+12a^2b+6ab^2-139b^3\\
E&=-7a^2b^2-91ab^3-273b^4\\
F&=-Eb\\
G&=-Db^3\\
H&=-Cb^5\\
I&=-Bb^7\\
J&=-Ab^9.\end{align*}
When $a=b=1$, we have that the Hankel transform of $C_{n+4}+C_{n+5}$ is given by
$$h_n=[x^{n+1}]\frac{1+35x+160x^2-120x^3-371x^4+371x^5+120x^6-160x^7-35x^8-x^9}{(1-3x+x^2)^7}.$$

We finish this section by noting that the matrix
$$ \tilde{M}\cdot H(n+4,a,b)\cdot \tilde{M}^T-M\cdot H(n+3,a,b)\cdot M^T $$ begins
$$\left(
\begin{array}{ccccccc}
 9a+28b & 5a+20b & a+7b & b & 0 & 0 & 0 \\
 5a+20b &  a+7b & b & 0 & 0 & 0 & 0 \\
 a+7b & b & 0 & 0 & 0 & 0 & 0 \\
 b & 0 & 0 & 0 & 0 & 0 & 0 \\
 0 & 0 & 0 & 0 &  0 & 0 & 0 \\
 0 & 0 & 0 & 0 & 0 & 0 & 0 \\
 0 & 0 & 0 & 0 & 0 & 0 & 0 \\
\end{array}
\right).$$ This is the Hankel matrix for the sequence
$$9a+28b , 5a+20b , a+7b , b,0,0,0,\ldots.$$ 

\section{The $2m+1$-diagonal matrices}
 An important idea of \cite{Dougherty} was to translate the calculation of Hankel determinants to that of calculating the principal minors of an $2m+1$-diagonal matrix. In examining the $2m+1$-diagonal matrices above, we see that each one, after a finite number of columns, coincides with the symmetric $2m+1$-diagonal matrix with ``spine'' given by the columns of the following matrix, whose general element is
$$D_{n,m}= a \binom{2m}{n-1}+b \binom{2m+2}{n}.$$ 
$$\left(
\begin{array}{ccccccc}
 b & b & b & b & b & b & b \\
 a+2 b & a+4 b & a+6 b & a+8 b & a+10 b & a+12 b & a+14 b \\
 b & 2 a+6 b & 4 a+15 b & 6 a+28 b & 8 a+45 b & 10 a+66 b & 12 a+91 b \\
 0 & a+4 b & 6 a+20 b & 15 a+56 b & 28 a+120 b & 45 a+220 b & 66 a+364 b \\
 0 & b & 4 a+15 b & 20 a+70 b & 56 a+210 b & 120 a+495 b & 220 a+1001 b \\
 0 & 0 & a+6 b & 15 a+56 b & 70 a+252 b & 210 a+792 b & 495 a+2002 b \\
 0 & 0 & b & 6 a+28 b & 56 a+210 b & 252 a+924 b & 792 a+3003 b \\
 0 & 0 & 0 & a+8 b & 28 a+120 b & 210 a+792 b & 924 a+3432 b \\
 0 & 0 & 0 & b & 8 a+45 b & 120 a+495 b & 792 a+3003 b \\
 0 & 0 & 0 & 0 & a+10 b & 45 a+220 b & 495 a+2002 b \\
 0 & 0 & 0 & 0 & b & 10 a+66 b & 220 a+1001 b \\
 0 & 0 & 0 & 0 & 0 & a+12 b & 66 a+364 b \\
 0 & 0 & 0 & 0 & 0 & b & 12 a+91 b \\
 0 & 0 & 0 & 0 & 0 & 0 & a+14 b \\
 0 & 0 & 0 & 0 & 0 & 0 & b \\
\end{array}
\right).$$ 
We must therefore determine to what extent the matrices $\tilde{M}\cdot H(m, a,b)\cdot \tilde{M}^T$ differ from such symmetric matrices. For instance, for $a C_{n+4}+ b C_{n+5}$, the difference
$$\text{Diag}(a \binom{8}{n-1}+b \binom{10}{n})-\tilde{M}\cdot H(4, a,b)\cdot \tilde{M}^T$$ is given by 
$$\left(
\begin{array}{ccccccc}
 28a+120b & 8a+45b & a+10b & b & 0 & 0 & 0 \\
 8a+45b &  a+10b & b & 0 & 0 & 0 & 0 \\
 a+10b & b & 0 & 0 & 0 & 0 & 0 \\
 b & 0 & 0 & 0 & 0 & 0 & 0 \\
 0 & 0 & 0 & 0 &  0 & 0 & 0 \\
 0 & 0 & 0 & 0 & 0 & 0 & 0 \\
 0 & 0 & 0 & 0 & 0 & 0 & 0 \\
\end{array}
\right).$$ 
This is the Hankel matrix for the sequence that begins 
$$28a+120b , 8a+45b , a+10b , b , 0 , 0 , 0,\ldots.$$ 
This is the sequence $$a\binom{2\cdot 4}{3-n-1}+b\binom{2\cdot 4+2}{3-n}.$$ 
In general, the Hankel matrices that we must subtract from the symmetric matrices are determined by the sequences occurring as rows in the follow array:
$$\left(
\begin{array}{ccccccc}
 0 & 0 & 0 & 0 & 0 & 0 & 0 \\
 b & 0 & 0 & 0 & 0 & 0 & 0 \\
 a+6 b & b & 0 & 0 & 0 & 0 & 0 \\
 6 a+28 b & a+8 b & b & 0 & 0 & 0 & 0 \\
 28 a+120 b & 8 a+45 b & a+10 b & b & 0 & 0 & 0 \\
 120 a+495 b & 45 a+220 b & 10 a+66 b & a+12 b & b & 0 & 0 \\
 495 a+2002 b & 220 a+1001 b & 66 a+364 b & 12 a+91 b & a+14 b & b & 0 \\
 2002 a+8008 b & 1001 a+4368 b & 364 a+1820 b & 91 a+560 b & 14 a+120 b & a+16 b & b \\
\end{array}
\right).$$ 
This array has general element $a \binom{2 r}{(r-1)-n-1}+b \binom{2r+2}{(r-1)-n}$. 
Thus for instance we have that 
$$\text{Diag}(a \binom{2\cdot 5}{n-1}+b \binom{2\cdot 5+2}{n})-\tilde{M}\cdot H(5, a,b)\cdot \tilde{M}^T$$ is given by 
$$\left(
\begin{array}{ccccccc}
 120a+495b & 45a+220b & 10a+66b & a+12b & b & 0 & \cdots \\
 45a+220b &  10a+66b & a+12b & b & 0 & 0 & \cdots \\
 10a+66b & a+12b & b & 0 & 0 & 0 & \cdots \\
 a+12b & b & 0 & 0 & 0 & 0 & \cdots \\
 b & 0 & 0 & 0 &  0 & 0 & \cdots \\
 0 & 0 & 0 & 0 & 0 & 0 & \cdots \\
 \vdots & \vdots & \vdots & \vdots & \vdots & \vdots & \ddots \\
\end{array}
\right).$$ 

\section{Production matrices}
If $L$ is a lower-triangular invertible matrix, then the matrix 
$$P_L=L^{-1} \bar{L}$$ is a Hessenberg type matrix, called the production matrix of $L$ \cite{Deutsch}. 
The production matrix of a Riordan matrix takes a particularly simple form. Given a Hessenberg matrix $P$ (with non-zero super-diagonal), it is possible to reconstruct the matrix $L$ such that $P=P_L$.
In order to prove that the Hankel transform of $a C_n+ b C_{n+1}$ is given by
$$h_n=[x^{n+1}] \frac{1-bx}{1-(a+2b)x+b^2x^2},$$
the authors in \cite{Dougherty} make use of the fact that
$$\left(\frac{1}{1+x}, \frac{x}{(1+x)^2}\right) \cdot \mathcal{H} \cdot \left(\frac{1}{1+x}, \frac{x}{(1+x)^2}\right)^T$$ is tri-diagonal. We can recast the results of \cite{Dougherty} in the following form, where we let 
$$\mathcal{H}=\left(a C_{i+j}+bC_{i+j+1}\right)_{0 \le i,j \le \infty}.$$
\begin{proposition} Let $M$ be the lower-triangular matrix with production matrix $P$ defined by
$$\left(\frac{1}{1+x}, \frac{x}{(1+x)^2}\right) \cdot \mathcal{H} \cdot \left(\frac{1}{1+x}, \frac{x}{(1+x)^2}\right)^T.$$ Then $(-1)^n$ times the first column of
$$(M_{n,k}/b^k)^{-1}$$ is given by $\tilde{h}_n$, where $h_n$ is the Hankel transform of $a C_n+ bC_{n+1}$.
\end{proposition}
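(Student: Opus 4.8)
The plan is to make the two objects in the statement explicit and then to match the first column of $M^{-1}$ against the continuant recurrence that already governs $\tilde h_n$. By the computation carried out earlier for $aC_n+bC_{n+1}$, the production matrix here is $P=\mathcal{M}\cdot\mathcal{H}\cdot\mathcal{M}^{\top}$, the symmetric tridiagonal matrix with diagonal entries $\alpha_0=a+b$, $\alpha_k=a+2b$ for $k\ge 1$, and all off-diagonal entries equal to $b$. Two facts about $P$ are immediate. First, since $\mathcal{M}$ is unit lower triangular, conjugation by it preserves leading principal minors, so the order-$m$ leading minor $D_m$ of $P$ equals the order-$m$ Hankel determinant of $\mathcal{H}$; thus $D_0=1$ and $D_m=h_{m-1}$, i.e. $D_m=\tilde h_m$, and expanding the top-left block along its last row gives the continuant recurrence $D_m=\alpha_{m-1}D_{m-1}-b^2 D_{m-2}$. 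Second, reconstructing $M$ from $P$ means $M_{0,\cdot}=e_0^{\top}$ and $M_{n+1,\cdot}=M_{n,\cdot}P$, so $M_{n,k}=(P^n)_{0,k}$; in particular $M$ is lower triangular with $M_{n,n}=b^n\neq 0$, and $N=(M_{n,k}/b^k)$ is unit lower triangular with $N^{-1}=\operatorname{diag}(1,b,b^2,\dots)\,M^{-1}$, whence $(N^{-1})_{n,0}=b^n(M^{-1})_{n,0}$.

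Next I would identify the rows of $M^{-1}$ with coefficient vectors of polynomials. Writing $p_n(t)=\sum_k (M^{-1})_{n,k}t^k$, the identity $M^{-1}M=I$ together with $M_{k,\cdot}=e_0^{\top}P^k$ rearranges row by row into $e_0^{\top}p_n(P)=e_n^{\top}$, equivalently $p_n(P)e_0=e_n$ since $P$ is symmetric. Applying $P$ to this relation and using $Pe_n=b\,e_{n-1}+\alpha_n e_n+b\,e_{n+1}$ gives $(t\,p_n)(P)e_0=b\,p_{n-1}(P)e_0+\alpha_n p_n(P)e_0+b\,p_{n+1}(P)e_0$. Because $b\neq 0$, the Krylov vectors $e_0,Pe_0,P^2e_0,\dots$ are linearly independent, so $q\mapsto q(P)e_0$ is injective on polynomials and may be stripped off, yielding the three-term recurrence $t\,p_n(t)=b\,p_{n+1}(t)+\alpha_n p_n(t)+b\,p_{n-1}(t)$ with $p_0=1$ and $p_1(t)=(t-\alpha_0)/b$. (These $p_n$ are the orthogonal polynomials attached to the Jacobi matrix $P$; all I use is that their coefficients populate $M^{-1}$.) This identification is the crux of the argument and the step I expect to demand the most care, since it is exactly where the production-matrix description of $M$ must be converted into a recurrence.

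With this in hand the conclusion is bookkeeping. The constant term of $p_n$ is $(M^{-1})_{n,0}=p_n(0)$, so I set $g_n=(-1)^n(N^{-1})_{n,0}=(-1)^n b^n p_n(0)$. Evaluating the three-term recurrence at $t=0$ gives $0=b\,p_{n+1}(0)+\alpha_n p_n(0)+b\,p_{n-1}(0)$; multiplying by $(-1)^{n+1}b^n$ and regrouping converts the three terms into $g_{n+1}$, $-\alpha_n g_n$, and $b^2 g_{n-1}$ respectively, so that $g_{n+1}=\alpha_n g_n-b^2 g_{n-1}$. The initial data $g_0=1$ and $g_1=-b\,p_1(0)=a+b$ are read off from $p_0=1$ and $p_1(0)=-\alpha_0/b$. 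This is precisely the continuant recurrence and initial data satisfied by $D_m=\tilde h_m$, so $g_n=\tilde h_n$ for every $n$, which is the assertion.

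I would close by noting that the only hypothesis genuinely needed is $b\neq 0$, which is in any case forced by the appearance of $M_{n,k}/b^k$, and that the alternating sign $(-1)^n$ together with the column-rescaling by $b^{-k}$ are exactly what is required to turn the $b$-weighted constant terms $p_n(0)$ of the orthogonal polynomials into the honest principal minors $D_m=\tilde h_m$ of $P$.
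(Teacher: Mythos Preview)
Your proof is correct, but it takes a genuinely different route from the paper's. The paper invokes Riordan array theory directly: since the production matrix $P$ is tridiagonal with the stated parameters, the standard correspondence between production matrices and Riordan arrays identifies $M$ as the inverse of $\left(\frac{b(1+x)}{b+(a+2b)x+bx^2},\frac{x}{b+(a+2b)x+bx^2}\right)$, from which the rescaled inverse $(M_{n,k}/b^k)^{-1}$ is recognised as the Riordan array $\left(\frac{1+bx}{1+(a+2b)x+b^2x^2},\frac{x}{1+(a+2b)x+b^2x^2}\right)$; its first column therefore has generating function $\frac{1+bx}{1+(a+2b)x+b^2x^2}$, and multiplying by $(-1)^n$ gives the known generating function $\frac{1-bx}{1-(a+2b)x+b^2x^2}$ of $\tilde h_n$.

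Your argument bypasses the Riordan identification entirely. You interpret the rows of $M^{-1}$ as the coefficient vectors of the monic-up-to-scale orthogonal polynomials $p_n$ attached to the Jacobi matrix $P$, derive their three-term recurrence from $Pe_n=be_{n-1}+\alpha_n e_n+be_{n+1}$, and then match the sequence $(-1)^n b^n p_n(0)$ against the continuant recurrence for the leading principal minors of $P$, which you already know equal $\tilde h_n$ because conjugation by the unit lower-triangular $\mathcal{M}$ preserves those minors. This is more self-contained and does not appeal to any black-box production-matrix/Riordan-array dictionary; the price is that it is longer and does not yield the explicit Riordan description of $(M_{n,k}/b^k)^{-1}$ that the paper exhibits. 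Both approaches make essential use of the tridiagonal form of $P$ established earlier in the paper.
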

\begin{proof} The matrix $P$ begins 
$$\left(
\begin{array}{ccccccc}
 a+b & b & 0 & 0 & 0 & 0 & 0 \\
 b & a+2 b & b & 0 & 0 & 0 & 0 \\
 0 & b & a+2 b & b & 0 & 0 & 0 \\
 0 & 0 & b & a+2 b & b & 0 & 0 \\
 0 & 0 & 0 & b & a+2 b & b & 0 \\
 0 & 0 & 0 & 0 & b & a+2 b & b \\
 0 & 0 & 0 & 0 & 0 & b & a+2 b \\
\end{array}
\right).$$ This is of the form required to generate a Riordan array. The theory of Riordan arrays now shows us that $M$ is given by
$$\left(\frac{b(1+x)}{b+(a+2b)x+bx^2}, \frac{x}{b+(a+2b)x+bx^2}\right)^{-1}.$$ 
Then $(M_{n,k}/b^k)^{-1}$ is given by the Riordan array 
$$\left(\frac{1+bx}{1+(a+2b)x+b^2x^2}, \frac{x}{1+(a+2b)x+b^2x^2}\right).$$ 
The first column of this matrix then has generating function $\frac{1+bx}{1+(a+2b)x+b^2x^2}$. 
\end{proof}
We note that the production matrix of $(M_{n,k}/b^k)^{-1}$ begins
$$\left(
\begin{array}{cccc}
 a+b & 1 & 0 & 0 \\
 b^2 & a+2 b & 1 & 0 \\
 0 & b^2 & a+2 b & 1 \\
 0 & 0 & b^2 & a+2 b \\
\end{array}
\right).$$
In like fashion, we have the following proposition.
\begin{proposition} Let $H=\left(a C_{i+j+1}+bC_{i+j+2}\right)_{0 \le i,j \le \infty}$. Let $M$ be the lower-triangular matrix with production matrix given by
$$\left(\frac{1}{(1+x)^2}, \frac{x}{(1+x)^2}\right) \cdot \mathcal{H} \cdot \left(\frac{1}{(1+x)^2}, \frac{x}{(1+x)^2}\right)^T.$$ Then $(-1)^n$ times the first column of
$$(M_{n,k}/b^k)^{-1}$$ is given by $\tilde{h}_n$, where $h_n$ is the Hankel transform of $a C_{n+1}+ bC_{n+2}$.
\end{proposition}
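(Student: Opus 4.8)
The plan is to mirror the proof of the previous proposition step for step, replacing $\mathcal{M}$ by $\tilde{\mathcal{M}}$ and the sequence $aC_n+bC_{n+1}$ by $aC_{n+1}+bC_{n+2}$. First I would invoke the computation already performed for $aC_{n+1}+bC_{n+2}$, where the bivariate generating function of $\tilde{\mathcal{M}}\cdot\mathcal{H}\cdot\tilde{\mathcal{M}}^T$ was found to be $\frac{a+2b+bx+by}{1-xy}$, so that this array is the symmetric tridiagonal matrix whose diagonal entries (the corner included) all equal $a+2b$ and whose sub- and super-diagonal entries all equal $b$. This is exactly the matrix named as the production matrix $P$ in the statement. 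Since $P$ is Hessenberg with nonzero super-diagonal $b$, there is a lower-triangular $M$ with $P=M^{-1}\bar{M}$, and, $P$ being of Toeplitz tridiagonal type, this $M$ is a Riordan array.

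Next I would reconstruct $M$ from $P$ by the same production-matrix argument used before. The columns of $P$ beyond the zeroth are Toeplitz and encode the $A$-sequence $A(x)=b+(a+2b)x+bx^2$, identical to the one arising in the earlier case; the zeroth column encodes the $Z$-sequence, and the only change here is its constant term, namely $a+2b$ in place of $a+b$. This single change is what removes the factor $1+x$ from the first component of $M$. Writing $\phi(x)=b+(a+2b)x+bx^2$, the reconstruction yields
$$M=\left(\frac{b}{\phi(x)},\ \frac{x}{\phi(x)}\right)^{-1}.$$
Dividing the $k$-th column of $M$ by $b^k$ is right multiplication by $(1,bx)^{-1}$, so that $(M_{n,k}/b^k)^{-1}=(1,bx)\cdot M^{-1}=(1,bx)\cdot\left(\frac{b}{\phi},\frac{x}{\phi}\right)$. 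Performing this composition and using $\phi(bx)=b\bigl(1+(a+2b)x+b^2x^2\bigr)$ gives
$$(M_{n,k}/b^k)^{-1}=\left(\frac{1}{1+(a+2b)x+b^2x^2},\ \frac{x}{1+(a+2b)x+b^2x^2}\right),$$
whose first column has generating function $\frac{1}{1+(a+2b)x+b^2x^2}$.

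It then remains to match this with the Hankel transform. By the proposition giving the Hankel transform of $aC_{n+1}+bC_{n+2}$, we have $h_n=[x^{n+1}]\frac{1}{1-(a+2b)x+b^2x^2}$, so the augmented sequence $\tilde{h}_n=1,h_0,h_1,\dots$ has generating function $\frac{1}{1-(a+2b)x+b^2x^2}$, and hence $(-1)^n\tilde{h}_n$ has generating function $\frac{1}{1+(a+2b)x+b^2x^2}$. This is precisely the first-column generating function obtained above, so $(-1)^n$ times the first column of $(M_{n,k}/b^k)^{-1}$ equals $\tilde{h}_n$, as required.

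The main obstacle is the middle step: justifying the passage from the production matrix $P$ back to $M$, and then tracking the two Riordan-group operations---column rescaling by $b^{-k}$ and inversion---carefully enough to land on the stated array. The entire argument turns on the corner entry $a+2b$, since it is the only feature distinguishing this computation from the previous proposition and is exactly what forces the numerator to be $1$ rather than $1+bx$; the delicate point is to confirm that the rescale-and-invert genuinely produces $\frac{1}{1+(a+2b)x+b^2x^2}$ and not a companion series carrying a spurious linear factor.
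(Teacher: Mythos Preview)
Your proposal is correct and follows essentially the same route as the paper. The paper does not write out a separate formal proof of this proposition; it simply says ``In like fashion'' and then, in the worked example and the general discussion that follow, carries out exactly the computation you describe: identifying $\tilde{\mathcal{M}}\cdot\mathcal{H}\cdot\tilde{\mathcal{M}}^T$ as the tridiagonal matrix with constant diagonal $a+2b$ and off-diagonal $b$, reconstructing $M$, rescaling columns by $b^{-k}$, and reading off the inverse as the Riordan array $\left(\frac{1}{1+(a+2b)x+b^2x^2},\frac{x}{1+(a+2b)x+b^2x^2}\right)$. Your use of the Riordan-group factorisation $(1,bx)\cdot M^{-1}$ to effect the rescale-and-invert is a slightly cleaner bookkeeping device than the paper's recomputation of the production matrix of the rescaled array, but the substance is identical, and your observation that the sole change from the previous proposition is the corner entry $a+2b$ (forcing numerator $1$ rather than $1+bx$) is exactly the point.
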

We take the example of $2 C_{n+1} + 3 C_{n+2}$. This sequence begins
$$8, 19, 52, 154, 480, 1551, 5148,\ldots.$$
We have
$$\left(
\begin{array}{ccccc}
 1 & 0 & 0 & 0 & 0 \\
 -2 & 1 & 0 & 0 & 0 \\
 3 & -4 & 1 & 0 & 0 \\
 -4 & 10 & -6 & 1 & 0 \\
 5 & -20 & 21 & -8 & 1 \\
\end{array}
\right)\cdot \left(
\begin{array}{ccccc}
 8 & 19 & 52 & 154 & 480 \\
 19 & 52 & 154 & 480 & 1551 \\
 52 & 154 & 480 & 1551 & 5148 \\
 154 & 480 & 1551 & 5148 & 17446 \\
 480 & 1551 & 5148 & 17446 & 60112 \\
\end{array}
\right)\cdot \left(
\begin{array}{ccccc}
 1 & 0 & 0 & 0 & 0 \\
 -2 & 1 & 0 & 0 & 0 \\
 3 & -4 & 1 & 0 & 0 \\
 -4 & 10 & -6 & 1 & 0 \\
 5 & -20 & 21 & -8 & 1 \\
\end{array}
\right)^{\text{T}}$$
$$=\left(
\begin{array}{ccccc}
 8 & 3 & 0 & 0 & 0 \\
 3 & 8 & 3 & 0 & 0 \\
 0 & 3 & 8 & 3 & 0 \\
 0 & 0 & 3 & 8 & 3 \\
 0 & 0 & 0 & 3 & 8 \\
\end{array}\right).$$
This last matrix (extended infinitely) is the production matrix of the lower-triangular matrix that begins
$$\left(
\begin{array}{ccccc}
 1 & 0 & 0 & 0 & 0 \\
 8 & 3 & 0 & 0 & 0 \\
 73 & 48 & 9 & 0 & 0 \\
 728 & 630 & 216 & 27 & 0 \\
 7714 & 7872 & 3699 & 864 & 81 \\
\end{array}
\right).$$ Dividing the $k$-th column by $3^k$ gives us the matrix
$$\left(
\begin{array}{ccccc}
 1 & 0 & 0 & 0 & 0 \\
 8 & 1 & 0 & 0 & 0 \\
 73 & 16 & 1 & 0 & 0 \\
 728 & 210 & 24 & 1 & 0 \\
 7714 & 2624 & 411 & 32 & 1 \\
\end{array}
\right).$$ The production matrix of this array is given by
$$\left(
\begin{array}{ccccc}
 8 & 1 & 0 & 0 & 0 \\
 9 & 8 & 1 & 0 & 0 \\
 0 & 9 & 8 & 1 & 0 \\
 0 & 0 & 9 & 8 & 1 \\
 0 & 0 & 0 & 9 & 8 \\
\end{array}\right)$$ signifying that the inverse of this matrix is the Riordan array
$$\left(\frac{1}{1+8x+9x^2}, \frac{x}{1+8x+9x^2}\right).$$
This begins
$$\left(
\begin{array}{ccccc}
 1 & 0 & 0 & 0 & 0 \\
 -8 & 1 & 0 & 0 & 0 \\
 55 & -16 & 1 & 0 & 0 \\
 -368 & 174 & -24 & 1 & 0 \\
 2449 & -1616 & 357 & -32 & 1 \\
\end{array}
\right).$$
The Hankel transform of $2 C_{n+1}+3 C_{n+2}$ begins $8,55,368,2449,\ldots$.

In the general case of $a C_{n+1}+ b C_{n+2}$, we obtain that $\tilde{\mathcal{M}} \cdot \mathcal{H} \cdot \tilde{\mathcal{M}}^T$ begins
$$\left(
\begin{array}{cccccc}
 a+2 b & b & 0 & 0 & 0 & 0 \\
 b & a+2 b & b & 0 & 0 & 0 \\
 0 & b & a+2 b & b & 0 & 0 \\
 0 & 0 & b & a+2 b & b & 0 \\
 0 & 0 & 0 & b & a+2 b & b \\
 0 & 0 & 0 & 0 & b & a+2 b \\
\end{array}
\right).$$
This is the production matrix of the array that begins
$$\left(
\begin{array}{cccc}
 1 & 0 & 0 & 0 \\
 a+2 b & b & 0 & 0 \\
 a^2+4 b a+5 b^2 & 4 b^2+2 a b & b^2 & 0 \\
 a^3+6 b a^2+15 b^2 a+14 b^3 & 14 b^3+12 a b^2+3 a^2 b & 6 b^3+3 a b^2 & b^3 \\
\end{array}
\right).$$
Dividing the $k$-th column by $b^k$, we then get the array which begins
$$\left(
\begin{array}{cccc}
 1 & 0 & 0 & 0 \\
 a+2 b & 1 & 0 & 0 \\
 a^2+4 b a+5 b^2 & 2 a+4 b & 1 & 0 \\
 a^3+6 b a^2+15 b^2 a+14 b^3 & 3 a^2+12 b a+14 b^2 & 3 a+6 b & 1 \\
\end{array}
\right).$$
This array has a production matrix that begins
$$\left(
\begin{array}{ccccc}
 a+2 b & 1 & 0 & 0 & 0 \\
 b^2 & a+2 b & 1 & 0 & 0 \\
 0 & b^2 & a+2 b & 1 & 0 \\
 0 & 0 & b^2 & a+2 b & 1 \\
 0 & 0 & 0 & b^2 & a+2 b \\
\end{array}
\right).$$

The inverse of the re-scaled matrix is thus given by the Riordan array
$$\left(\frac{1}{1+(a+2b)x+b^2x^2}, \frac{x}{1+(a+2b)x+b^2 x^2}\right).$$
Then $(-1)^n$ times the first column of this matrix has generating function $\frac{1}{1-(a+2b)x+b^2x^2}$.

\section{Continued fractions and ratios of Hankel transforms}
The theory of orthogonal polynomials and continued fractions underlies much of the theory surrounding the Hankel transforms of the sums of successive pairs of Catalan numbers. The following example gives some insight into this. 
\begin{example}
We consider the case of $C_{n+2}+C_{n+3}$. This sequence begins 
$$7, 19, 56, 174, 561, 1859,\ldots.$$ We ``normalize" this sequence by dividing by $7$, to get the sequence 
$$1, 19/7, 56/7, 174/7, 561/7, 1859/7,\ldots.$$ 
This sequence then has a generating function that can be expressed as the continued fraction that begins 
$$\cfrac{1}{1-\frac{19}{7}x-\cfrac{\frac{31}{49}x^2}{1-\frac{489}{217}x-\cfrac{\frac{805}{961}x^2}{1-\frac{1511}{713}x-\cfrac{\frac{2418}{2645}x^2}{1-\cdots}}}}.$$
This continued fraction corresponds to the tridiagonal matrix that begins
$$\left(
\begin{array}{cccccccc}
 \frac{19}{7} & 1 & 0 & 0 & 0 & 0 & 0 & 0 \\
 \frac{31}{49} & \frac{489}{217} & 1 & 0 & 0 & 0 & 0 & 0 \\
 0 & \frac{805}{961} & \frac{1511}{713} & 1 & 0 & 0 & 0 & 0 \\
 0 & 0 & \frac{2418}{2645} & \frac{618}{299} & 1 & 0 & 0 & 0 \\
 0 & 0 & 0 & \frac{4807}{5070} & \frac{5549}{2717} & 1 & 0 & 0 \\
 0 & 0 & 0 & 0 & \frac{253045}{262086} & \frac{1650954}{813637} & 1 & 0 \\
 0 & 0 & 0 & 0 & 0 & \frac{14783406}{15155449} & \frac{92763259}{45894577} & 1 \\
 0 & 0 & 0 & 0 & 0 & 0 & \ddots & \ddots \\
\end{array}
\right).$$
The sequence of principal minors of this matrix then begins 
$$\frac{19}{7}, \frac{170}{31}, \frac{1075}{115}, \frac{5580}{390},\frac{25529}{1254}, \frac{107036}{3893},\frac{421035}{11789},\ldots.$$
These are the successive ratios of the Hankel transform of $C_{n+3}+C_{n+4}$ which begins
$$19, 170, 1075, 5580, 25529, 107036, 421035, 1577575,\ldots$$
and of the Hankel transform of $C_{n+2}+C_{n+3}$ which begins 
$$7, 31, 115, 390, 1254, 3893, 11789, 35045,\ldots.$$ 
In the general case of $a C_{n+2}+bC_{n+3}$, the sequence of principal minors begins
$$\frac{5 a+14 b}{2 a+5 b},\frac{14 a^2+72 a b+84 b^2}{3 a^2+14 a b+14 b^2}, \frac{30 a^3+220 a^2 b+495 a b^2+330 b^3}{4 a^3+27 a^2 b+54 a b^2+30 b^3},\ldots,$$
where the numerator polynomials give the Hankel transform of $a C_{n+2}+b C_{n+3}$, and the denominator polynomials give the Hankel transform of $a C_{n+3}+ b C_{n+4}$. 
We conjecture that this pattern holds good for all $a C_{n+m}+b C_{n+m+1}$. 
\end{example}

\section{Conclusions} 
We have used the techniques of generating functions and Riordan arrays to throw some light on the nature of the Hankel transforms of linear combinations of successive pairs of Catalan numbers. The form of such Hankel transforms in the main is still conjectural, tied up in the expression 
$$T_{n,k,m}=\frac{C_m \binom{m+k-2}{m-2}\binom{n+k+2m-2}{2k+2m-3}\prod_{j=0}^{\lfloor \frac{m-1}{2} \rfloor-1}\binom{2n+2m-2j-1}{2m-4j-5}\prod_{j=0}^{m-3}(2m-j-2)}{\binom{2m-2}{2m-3}\prod_{j=0}^{\lfloor \frac{m-1}{2} \rfloor-1} \binom{2m-2j-1}{2m-4j-5} \prod_{j=0}^{m-3}(n+k+2m-j-2)}$$ for the coefficient matrix that may determine the general case. The links that these matrices may have to interesting combinatorial objects (such as lattice paths and plane partitions) awaits further investigation.

\bigskip
\hrule
\bigskip
\noindent 2020 {\it Mathematics Subject Classification}: Primary
11C20; Secondary 11B83, 15A15, 15B05, 15B36.
\noindent \emph{Keywords:}  Catalan numbers, Hankel determinant, Riordan array, production matrix, generating function, integer sequence.

\bigskip
\hrule
\bigskip
\noindent (Concerned with sequences
\seqnum{A000108},\seqnum{A000108}, \seqnum{A001519}, \seqnum{A001906}, \seqnum{A007318}, \seqnum{A039598}, \seqnum{A039599}, \seqnum{A078812}, \seqnum{A078920}, \seqnum{A085478}, \seqnum{A123352}, and \seqnum{A197649}.)

\end{document}